\definecolor{webgreen}{rgb}{0,.5,0}
\definecolor{webbrown}{rgb}{.6,0,0}
\theoremstyle{plain}
\newtheorem{theorem}{Theorem}
\newtheorem{corollary}[theorem]{Corollary}
\newtheorem{lemma}[theorem]{Lemma}
\theoremstyle{definition}
\newtheorem{remark}[theorem]{Remark}
\newtheorem{conjecture}[theorem]{Conjecture}
\numberwithin{equation}{section}
\numberwithin{theorem}{section}
\numberwithin{figure}{section}
\numberwithin{table}{section}
\newcommand{\FF}{\mathbb{F}}
\newcommand{\ZZ}{\mathbb{Z}}
\newcommand{\andq}{\quad \text{and} \quad}
\newcommand{\textq}[1]{\quad \text{#1} \quad}
\newcommand{\norm}[1]{\mleft\|#1\mright\|}
\newcommand{\set}[1]{\mleft\{#1\mright\}}
\newcommand{\coeff}[1]{\mleft[#1\mright]}
\newcommand{\BE}{\mathcal{B}}
\newcommand{\PF}{\mathcal{P}}
\newcommand{\PFS}{\mathcal{P}_\Sigma}
\newcommand{\ST}{\mathcal{S}}
\newcommand{\WQ}{\mathcal{W}}
\newcommand{\AH}{\overline{H}}
\newcommand{\sigpi}{\widehat{\sigma}}
\newcommand{\seqnum}[1]{\href{https://oeis.org/#1}{\rm \underline{#1}}}
\newcommand{\spod}[1]{\allowbreak\if@display\mkern8mu\else\mkern4mu\fi(#1)}
\newcommand{\smod}[1]{\spod{{\operator@font mod}\mkern6mu#1}}
\begin{document}

\title[Wilson's theorem modulo higher prime powers I]
{Wilson's theorem modulo higher prime powers I:\\Fermat and Wilson quotients}
\author{Bernd C. Kellner}
\address{G\"ottingen, Germany}
\email{bk@bernoulli.org}
\subjclass[2020]{11B65 (Primary), 11A07, 11B83 (Secondary)}
\keywords{Fermat quotient, Wilson quotient, Wilson's theorem, 
Bell polynomial, symmetric polynomial, supercongruence.}

\begin{abstract}
We show that Wilson's theorem as well as the Wilson quotient can be described by
supercongruences modulo any higher prime power involving terms of power sums of 
Fermat quotients. The new approach uses Bell polynomials and Newton's identities 
relating elementary symmetric polynomials to power sums. This enables us to compute 
certain multivariate polynomials recursively that are needed to establish the 
supercongruences. Subsequently, we give a recurrence formula for these polynomials 
and show further properties.
\end{abstract}

\maketitle


\section{Introduction}

Let $p$ be an odd prime throughout the paper. The well-known Wilson's theorem states that
\[
  (p-1)! \equiv -1 \smod{p},
\]
which can be proved in various ways. This leads to the definition of the Wilson quotient 
\begin{equation} \label{eq:wp}
  \WQ_p = \frac{(p-1)! + 1}{p}.
\end{equation}
By Fermat's little theorem, the congruence
\[
  a^{p-1} \equiv 1 \smod{p}
\]
holds for all integers $a$ coprime to $p$, which provides the definition of the Fermat quotient
\begin{equation} \label{eq:qp}
  q_p(a) = \frac{a^{p-1} - 1}{p}.
\end{equation}

In 1771, Lagrange~\cite{Lagrange:1771} gave a first proof of Wilson's theorem by the relation
\begin{equation} \label{eq:prod}
  \prod_{a=1}^{p-1} (x-a) \equiv x^{p-1} - 1 \smod{p}.
\end{equation}
(Note that the terms are written equivalently as $x+a$ in the original paper~\cite{Lagrange:1771}.) 
Simultane\-ously, the above congruence also provides a proof of Fermat's theorem.
Moreover, the relation gives the $p-1$ distinct roots of the polynomial $x^{p-1} - 1$ in 
$\FF_p^\times$, where $\FF_p$ denotes the finite field of $p$ elements.
For this reason, Bachmann~\cite[Chap.\,5, pp.\,153--179]{Bachmann:1902} devoted a joint chapter 
to the theorems of Fermat and Wilson, also giving a historical overview of the results in 1902.

A view years later, Lerch~\cite{Lerch:1905} established the essential connection in 1905 that
\begin{equation} \label{eq:wp-qp}
  \WQ_p \equiv \sum_{a=1}^{p-1} q_p(a) \smod{p},
\end{equation}
and showed several identities of the Fermat quotients. The basic logarithmic rule
\[
  q_p(ab) \equiv q_p(a) + q_p(b) \smod{p}
\]
was found by Eisenstein~\cite{Eisenstein:1850} before in 1850. 

We consider sums of powers of the Fermat quotients defined by
\begin{equation} \label{eq:Qp}
  Q_p(n) = \sum_{a=1}^{p-1} q_p(a)^n \quad (n \geq 1),
\end{equation}
in order to establish supercongruences, i.e., congruences modulo any higher prime power, 
of the Wilson quotient $\WQ_p$ and of the factorial $(p-1)!$. 
In a forthcoming paper \cite{Kellner:2025}, we will translate these results into 
congruences in terms of Bernoulli numbers. In this latter context, 
Glaisher~\cite{Glaisher:1900} derived a congruence of $(p-1)! \smod{p^2}$ in 1900. 
It took 100 years to achieve the next result $(p-1)! \smod{p^3}$ provided by Z.~H.~Sun~\cite{Sun:2000}. 
Both results are causally induced by considering the product of the left-hand side of \eqref{eq:prod}.

Our new approach uses the basic relationship between \eqref{eq:wp} and \eqref{eq:qp} 
by evaluating terms $\smod{p^n}$ for any $n \geq 1$, which can then be converted 
into a recursive procedure. We further use Bell polynomials and Newton's identities 
relating elementary symmetric polynomials to power sums. This leads to the definition 
of certain multivariate polynomials that can be recursively computed.

As a matter of fact, Lerch~\cite[pp.\,471--472]{Lerch:1905} handled only the simple 
case $\WQ_p \smod{p}$ to derive his congruence \eqref{eq:wp-qp} in a straightforward way. 
However, the general case could have been revealed for 120 years.
The main result of the paper is as follows.

\begin{theorem} \label{thm:main}
We have the following statements:
\begin{enumerate}
\item There exist unique multivariate polynomials 
\[
  \psi_\nu(x_1,\ldots,x_\nu) \in \ZZ[x_1,\ldots,x_\nu] \quad (\nu \geq 1),
\]
which have no constant term. These polynomials can be computed recursively; 

\item Let ${n \geq 1}$ and ${p > n}$ be an odd prime. Then we have 
\begin{align*}
  \WQ_p &\equiv \sum_{\nu=1}^{n} \frac{p^{\nu-1}}{\nu!} \, \psi_\nu( Q_p(1),\ldots, Q_p(\nu)) \smod{p^n}, \\
\shortintertext{and equivalently,}
  (p-1)! &\equiv -1 + \sum_{\nu=1}^{n} \frac{p^\nu}{\nu!} \, \psi_\nu( Q_p(1),\ldots, Q_p(\nu)) \smod{p^{n+1}}.
\end{align*}
\end{enumerate}
\end{theorem}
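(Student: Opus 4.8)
The plan is to turn the defining relations \eqref{eq:wp} and \eqref{eq:qp} into one exact $p$-adic identity and then to ``universalize'' it. Multiplying Fermat's little theorem over $a=1,\dots,p-1$ gives $\bigl((p-1)!\bigr)^{p-1}=\prod_{a=1}^{p-1}a^{p-1}$; inserting $a^{p-1}=1+p\,q_p(a)$ from \eqref{eq:qp} and $(p-1)!=p\WQ_p-1$ from \eqref{eq:wp}, and using that $p-1$ is even, one obtains
\[
  (1-p\WQ_p)^{p-1}=\prod_{a=1}^{p-1}\bigl(1+p\,q_p(a)\bigr)=\sum_{k=0}^{p-1}p^k\,e_k\bigl(q_p(1),\dots,q_p(p-1)\bigr),
\]
where $e_k$ denotes the $k$-th elementary symmetric polynomial. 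Taking $p$-adic logarithms (all arguments lie in $1+p\ZZ_p$, on which $\log$ is an isomorphism onto $p\ZZ_p$ for $p$ odd), expanding $\log(1+p\,q_p(a))=\sum_{j\ge1}\tfrac{(-1)^{j-1}}{j}p^jq_p(a)^j$, summing over $a$ via \eqref{eq:Qp}, dividing by the $p$-adic unit $p-1$, and exponentiating, I get the master identity
\[
  p\WQ_p\;=\;1-\exp\!\left(\frac{1}{p-1}\sum_{j\ge1}\frac{(-1)^{j-1}}{j}\,p^j\,Q_p(j)\right),
\]
all series converging $p$-adically because $v_p(p^j/j)\to\infty$ and the exponent lies in $p\ZZ_p$.

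Next I would replace $p$ by a formal variable $t$ and each $Q_p(j)$ by an indeterminate $y_j$, and put
\[
  \Psi(t)\;=\;\frac1t\!\left(1-\exp\!\left(\frac{t}{t-1}\sum_{j\ge1}\frac{(-1)^{j-1}}{j}\,y_j\,t^{j-1}\right)\right)\;\in\;\mathbb{Q}[y_1,y_2,\dots][[t]],
\]
which is well defined because $\tfrac{t}{t-1}=-(t+t^2+\cdots)$ forces the exponent into $t\,\mathbb{Q}[y][[t]]$. Writing $\Psi(t)=\sum_{\nu\ge1}\Psi_\nu t^{\nu-1}$, one checks $\Psi_\nu\in\mathbb{Q}[y_1,\dots,y_\nu]$, and I set $\psi_\nu:=\nu!\,\Psi_\nu$. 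The ``no constant term'' claim is then immediate, since $\Psi(t)\big|_{y=0}=\tfrac1t(1-\exp 0)=0$ forces $\Psi_\nu(0)=0$. Recursive computability follows by rewriting the defining equation as $(t-1)\log(1-t\Psi)=\sum_j\tfrac{(-1)^{j-1}}{j}y_jt^j$ and differentiating, yielding a polynomial relation that determines $[t^{\nu-1}]\Psi$ from $\Psi_1,\dots,\Psi_{\nu-1}$ and $y_1,\dots,y_\nu$; alternatively, expanding $(1-p\WQ_p)^{p-1}=(1-p\WQ_p)^{-1}(1-p\WQ_p)^p$ and using $\binom{p}{l}\equiv0\smod p$ gives a self-improving congruence $\WQ_p\equiv\sum_kp^{k-1}e_k+p(\cdots)\smod{p^n}$ which, for $p>n$, is a finite universal recursion.

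The technical heart is the integrality $\psi_\nu\in\ZZ[y_1,\dots,y_\nu]$, and this is where Newton's identities and Bell polynomials do the work. From $\sum_ke_kt^k=\exp\bigl(\sum_{j\ge1}\tfrac{(-1)^{j-1}}{j}y_jt^j\bigr)$ (with $y_j$ the power sums) one reads off that $k!\,e_k$ is a complete Bell polynomial in the $y_j$, hence lies in $\ZZ[y_1,\dots,y_k]$, although $e_k$ itself has denominators. Expanding $\Psi(t)=-\sum_{m\ge1}\tfrac{1}{m!}\,t^{m-1}(t-1)^{-m}\bigl(\sum_j\tfrac{(-1)^{j-1}}{j}y_jt^{j-1}\bigr)^m$ and collecting, the coefficient of a monomial $\prod_jy_j^{m_j}$ in $\psi_\nu$ comes out (with $m=\sum_jm_j$ and $\sum_j j\,m_j=l+m\le\nu$) as $\pm\,\nu!\,\binom{\nu-1-l}{\,\nu-m-l\,}\big/\prod_j\bigl(m_j!\,j^{m_j}\bigr)$, in which the binomial is an honest nonnegative integer. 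Integrality thus reduces to $\prod_j\bigl(m_j!\,j^{m_j}\bigr)\mid\nu!$, which holds because $\prod_j\bigl((j!)^{m_j}m_j!\bigr)$ divides $\bigl(\sum_j j\,m_j\bigr)!$ — it is the denominator in the count of set partitions of $\bigl[\sum_j j\,m_j\bigr]$ into $m_j$ blocks of size $j$ — and $\bigl(\sum_j j\,m_j\bigr)!\mid\nu!$, while $j^{m_j}\mid(j!)^{m_j}$; the same can instead be obtained by induction on $\nu$ from the recursion together with the integrality of $k!\,e_k$.

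Finally, for part (2) I would substitute $t=p$, $y_j=Q_p(j)$ into $\Psi$. Each term $\tfrac{p^{\nu-1}}{\nu!}\psi_\nu\bigl(Q_p(1),\dots,Q_p(\nu)\bigr)$ is a $p$-adic integer of valuation $\ge(\nu-1)-v_p(\nu!)\to\infty$, so the series converges $p$-adically and, by the master identity, sums to $\WQ_p$. For $p>n$ the tail terms $\nu\ge n+1$ all vanish modulo $p^n$: for $n+1\le\nu\le p-1$ one has $v_p(\nu!)=0$, so the valuation is $\ge\nu-1\ge n$; for $\nu\ge p+1$ a Legendre-type estimate gives valuation $\ge p-1\ge n$; and the single borderline term $\nu=p$, which occurs only when $p=n+1$, is handled using $q_p(a)^p\equiv q_p(a)\smod p$, that is, $Q_p(p)\equiv Q_p(1)\smod p$, together with the explicit shape of $\psi_p$, to conclude $p\mid\psi_p\bigl(Q_p(1),\dots,Q_p(p)\bigr)$. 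This yields the first congruence; multiplying by $p$ and using $(p-1)!=p\WQ_p-1$ gives the equivalent one. Uniqueness is then clear, as the $\psi_\nu$ are pinned down outright by the recursion (equivalently, by the coefficients of $\Psi$), so any family satisfying the stated congruences for all admissible $n$ and $p$ must coincide with it. I expect the denominator bookkeeping in the integrality step to be the main obstacle, with the borderline prime $p=n+1$ in the tail estimate a close second.
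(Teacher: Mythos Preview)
Your approach is correct but takes a genuinely different route from the paper. Both start from the same product identity $(1-p\WQ_p)^{p-1}=\prod_a(1+p\,q_p(a))$, but the paper stays on the level of finite congruences: it expands each side binomially, isolates $\WQ_p$ as in \eqref{eq:wp-n}, and bootstraps an approximation $\WQ_{p,\ell}\equiv\WQ_p\pmod{p^\ell}$ by substituting lower-order approximations for the $\WQ_p$-powers on the right. Integrality of the emerging $\psi_\nu$ is then automatic, because each step manipulates honest integers ($\binom{p-1}{k}$, Bell polynomials via Lemma~\ref{lem:p-sum-pow}), and the fact that the recursion is independent of $p$ is extracted at the end by letting $p\to\infty$ through primes. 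Your route instead passes through $p$-adic $\log$/$\exp$ to get an \emph{exact} identity, then lifts it to a formal generating function $\Psi(t)$, defines $\psi_\nu=\nu!\,[t^{\nu-1}]\Psi$, and proves integrality combinatorially via the set-partition divisibility $\prod_j(j!)^{m_j}m_j!\mid(\sum_j jm_j)!$. This buys you a closed-form description of the $\psi_\nu$ and a transparent reason for ``no constant term'', at the cost of a separate tail estimate when truncating. That estimate is straightforward except at the single term $\nu=p$ when $p=n+1$; there your sketch is correct but deserves to be spelled out: from your explicit coefficient formula one sees $\psi_p\equiv x_1^{\,p}-x_p\pmod p$ (only the monomials $x_1^p$ and $x_p$ have denominators carrying a factor $p$, with coefficients $1$ and $(p-1)!\equiv-1$), whence $\psi_p(Q_p)\equiv Q_p(1)^p-Q_p(p)\equiv Q_p(1)-Q_p(1)=0\pmod p$ by Fermat. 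The paper's finite-congruence path avoids this corner case entirely, since its iteration never introduces denominators beyond $n!<p$; your analytic path is more conceptual and yields the generating function of the $\psi_\nu$ directly, which the paper does not obtain.
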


See Table~\ref{tbl:psi} for the first few computed polynomials $\psi_\nu$ 
and Table~\ref{tbl:psi2} for continued computations, respectively.
The recurrence formula for $\psi_\nu$ and some properties of these polynomials are 
presented in Section~\ref{sec:prop}, since we need to introduce further notation 
and definitions. 

\begin{corollary}
Let ${n \geq 1}$ and ${p > n}$ be an odd prime. For computational purposes, we need 
to compute the following initial terms and to evaluate the polynomials $\psi_\nu$ 
in different moduli. 
For $\WQ_p \smod{p^n}$ and $(p-1)! \smod{p^{n+1}}$, respectively, we have
\[
  \set{Q_p(1), \psi_1 \smod{p^n}, Q_p(2), \psi_2 \smod{p^{n-1}}, \ldots, Q_p(n), \psi_n \smod{p}}.
\]
\end{corollary}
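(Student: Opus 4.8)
The plan is to obtain the corollary as a direct consequence of Theorem~\ref{thm:main}(2), the only work being a $p$-adic accounting of the precision with which each ingredient enters the two congruences. First I would record that, since $p > n \ge \nu$ for every index $\nu$ occurring in the sums, the factorial $\nu!$ is prime to $p$; hence $\frac{p^{\nu-1}}{\nu!}$ and $\frac{p^\nu}{\nu!}$ are unambiguous elements of $\ZZ_{(p)}$, lying in $p^{\nu-1}\ZZ_{(p)}$ and $p^\nu\ZZ_{(p)}$ respectively, so that the reduction $\smod{p^n}$ (resp.\ $\smod{p^{n+1}}$) of each summand is well defined even before one clears the unit denominator.

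Next I would argue as follows for $\WQ_p$. In $\WQ_p \equiv \sum_{\nu=1}^{n}\frac{p^{\nu-1}}{\nu!}\,\psi_\nu(Q_p(1),\ldots,Q_p(\nu)) \smod{p^n}$ the $\nu$-th summand is divisible by $p^{\nu-1}$, so modulo $p^n$ it is already determined once the integer $\psi_\nu(Q_p(1),\ldots,Q_p(\nu))$ is known modulo $p^{n-\nu+1}$. Because $\psi_\nu \in \ZZ[x_1,\ldots,x_\nu]$, this residue in turn depends only on the polynomial $\psi_\nu$ taken modulo $p^{n-\nu+1}$ together with the residues $Q_p(i) \smod{p^{n-\nu+1}}$ for $1 \le i \le \nu$. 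Now collect these demands over $\nu = 1,\ldots,n$: a given power sum $Q_p(i)$ occurs in the summands $\nu = i, i+1, \ldots, n$, and is required to its highest precision exactly when it first appears, namely in the summand $\nu = i$, which asks for $Q_p(i) \smod{p^{n-i+1}}$; every later summand uses it only to the coarser modulus $p^{n-\nu+1}$. This yields precisely the list $\set{Q_p(1),\psi_1\smod{p^n},\,Q_p(2),\psi_2\smod{p^{n-1}},\,\ldots,\,Q_p(n),\psi_n\smod{p}}$.

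Finally, for $(p-1)!$ I would run the identical computation on the second congruence of Theorem~\ref{thm:main}(2): there the $\nu$-th summand $\frac{p^\nu}{\nu!}\psi_\nu(\ldots)$ is divisible by $p^\nu$ while one works modulo $p^{n+1}$, so again only $\psi_\nu(\ldots)$ modulo $p^{n+1-\nu} = p^{n-\nu+1}$ is needed, i.e.\ exactly the same data as for $\WQ_p \smod{p^n}$; alternatively one may just use $(p-1)! = p\,\WQ_p - 1$. I do not expect any genuine obstacle here: the statement is a routine unwinding of Theorem~\ref{thm:main}, and the one point requiring care is the precision bookkeeping --- trading the explicit prefactor $p^{\nu-1}$ (resp.\ $p^\nu$) against the modulus and confirming that the unit $\nu!^{-1}$ alters no $p$-adic valuation, which is exactly what the hypothesis $p > n$ secures.
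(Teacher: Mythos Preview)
Your proposal is correct and matches the paper's approach: the paper states this corollary without proof, treating it as an immediate consequence of Theorem~\ref{thm:main}(2), which is exactly the $p$-adic precision bookkeeping you carry out. The key observation---that $p>n$ makes each $\nu!$ a $p$-adic unit, so the prefactor $p^{\nu-1}/\nu!$ has valuation $\nu-1$ and hence $\psi_\nu(Q_p)$ need only be known modulo $p^{n-\nu+1}$---is precisely the intended reading.
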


\begin{table}[H] \small
\setstretch{1.25}
\begin{center}
\begin{tabular}{r@{\;=\;}l}
  \toprule
  $\psi_1$ & $x_1$ \\
  $\psi_2$ & $2 x_1 - x_1^2 - x_2$ \\ 
  $\psi_3$ & $6 x_1 - 6 x_1^2 + x_1^3 + 3 x_1 x_2 - 3 x_2 + 2 x_3$ \\
  $\psi_4$ & $24 x_1 - 36 x_1^2 + 12 x_1^3 - x_1^4 - 6 x_1^2 x_2 + 24 x_1 x_2 - 8 x_1 x_3 - 12 x_2 - 3 x_2^2 + 8 x_3 - 6 x_4$ \\
  \bottomrule
\end{tabular}

\caption{First few polynomials $\psi_\nu$.}
\label{tbl:psi}
\end{center}
\end{table}

Let $\PF(n)$ be the partition function for $n \geq 1$. Define the partial sums 
$\PFS(n) = \sum_{\nu=1}^{n} \PF(\nu)$.
For a polynomial $f$, let $\# f$ denote the number of its terms.

\begin{theorem} \label{thm:num}
For $n \geq 1$, we have that $\# \psi_n \leq \PFS(n)$.
\end{theorem}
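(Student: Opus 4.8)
The plan is to attach to each monomial that can appear in $\psi_n$ a partition of an integer between $1$ and $n$, so that distinct monomials give distinct partitions; the bound $\#\psi_n \le \PFS(n)$ then reduces to counting partitions.

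First I would put on $\ZZ[x_1,\ldots,x_\nu]$ the \emph{weight grading} in which $x_i$ has weight $i$, so that the monomial $x_1^{a_1}\cdots x_\nu^{a_\nu}$ has weight $w=\sum_{i\ge 1} i\,a_i$. A monomial of weight $m$ then corresponds bijectively to the partition of $m$ having exactly $a_i$ parts equal to $i$, and such a monomial involves only the variables $x_1,\ldots,x_m$. Consequently, for each $m$ with $1\le m\le n$ the number of monomials of weight $m$ in $x_1,\ldots,x_n$ is exactly $\PF(m)$.

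The key step is to show that every monomial occurring in $\psi_n$ has weight $m$ with $1\le m\le n$. The lower bound $m\ge 1$ is immediate from Theorem~\ref{thm:main}(1): $\psi_n$ has no constant term, so the weight-$0$ monomial does not occur. For the upper bound $m\le n$ I would induct on $n$, using the recurrence for $\psi_\nu$ established in Section~\ref{sec:prop}. The base case is $\psi_1=x_1$, a single monomial of weight $1$. In the inductive step one verifies that every term produced by the recurrence for $\psi_n$ arises from a monomial of some $\psi_\nu$ with $\nu<n$ by a weight-bounded operation (multiplication by a variable, or a shift of indices) that keeps the total weight at most $n$; here one also uses $\psi_n\in\ZZ[x_1,\ldots,x_n]$, so no variable $x_j$ with $j>n$ is available to raise the weight further.

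Granting this, $\psi_n$ is a $\ZZ$-linear combination of monomials, each of weight in $\set{1,\ldots,n}$, whence
\[
  \#\psi_n \;\le\; \sum_{m=1}^{n}\#\set{\text{monomials of weight }m\text{ in }x_1,\ldots,x_n}
  \;=\; \sum_{m=1}^{n}\PF(m) \;=\; \PFS(n).
\]
The main obstacle is the weight bound in the key step: it requires reading off from the recurrence of Section~\ref{sec:prop} that the construction of $\psi_n$ never produces a monomial of weight exceeding $n$ — equivalently, that $\psi_n$ is supported in weights $1,\ldots,n$. Once that is in hand, the remainder is the elementary correspondence between weighted monomials and partitions, together with the definition of $\PFS$.
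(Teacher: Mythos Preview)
Your approach is correct and essentially identical to the paper's: your weight grading is precisely the paper's maximum partition order $\norm{\cdot}$, and your ``key step'' is the content of Theorem~\ref{thm:psi-ord}, from which Corollary~\ref{cor:psi-num} draws exactly the same counting conclusion via the unfolded recurrence~\eqref{eq:psi-rec3}. One small correction to your sketch of the inductive step: the recurrence (Theorem~\ref{thm:psi-rec}) does not build $\psi_n$ from earlier $\psi_\nu$ by ``multiplication by a variable, or a shift of indices'' but rather as $n\,\psi_{n-1}+\sigpi_n^\star+\Psi_n$, where $\sigpi_n^\star$ is homogeneous of weight exactly $n$ and $\Psi_n$ is a combination of Bell polynomials $\BE_{n-k,\nu}(\psi_1,\ldots)$, i.e., sums of \emph{products} of the earlier $\psi_\nu$; bounding the weight of such products is precisely what the induction in Theorem~\ref{thm:psi-ord} carries out.
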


The first few values of $\PFS$ are
\[
  1, 3, 6, 11, 18, 29, 44, 66, 96, 138, 194, 271, 372, 507, 683, 914, \ldots,
\]
which is sequence \seqnum{A026905} in OEIS~\cite{OEIS}.

We actually need the help of computer algebra systems for such calculations as given in Tables~\ref{tbl:psi} 
and~\ref{tbl:psi2}. We used \textsl{Mathematica} to compute the polynomials and related terms,
and to check all results of the paper. Note that further \textsl{improvements} to higher prime powers 
will only lead to an immense number of terms, which grow exponentially due to the partition function.

Since the terms of the polynomials $\psi_\nu$ have different signs and are determined recursively, 
it is not clear whether terms can vanish. However, computing the first 30 polynomials $\psi_\nu$ 
(note that $\# \psi_{30} = \PFS(30) = 28\,628$) and verifying the equality in this range, 
we may state the following conjecture.

\begin{conjecture} 
For $n \geq 1$, we have that $\# \psi_n = \PFS(n)$.
\end{conjecture}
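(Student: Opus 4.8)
The plan is to prove the stronger fact that \emph{every} coefficient of $\psi_\nu$ is nonzero, by writing down a closed form for it; the equality $\#\psi_n=\PFS(n)$ then follows at once. Index the monomials of $\ZZ[x_1,\ldots,x_\nu]$ by partitions: for a partition $\lambda=(\lambda_1\ge\lambda_2\ge\cdots)$ put $x_\lambda=\prod_i x_{\lambda_i}$, write $|\lambda|=\sum_i\lambda_i$ for its weight, $\ell(\lambda)$ for its number of parts, and $z_\lambda=\prod_{i\ge1}i^{m_i}\,m_i!$ where $m_i$ is the number of parts of $\lambda$ equal to $i$. I claim that
\[
  \coeff{x_\lambda}\psi_\nu=(-1)^{|\lambda|-1}\,\frac{\nu!}{z_\lambda}\binom{\nu-|\lambda|+\ell(\lambda)-1}{\ell(\lambda)-1}
  \qquad(|\lambda|\le\nu),
\]
and $\coeff{x_\lambda}\psi_\nu=0$ otherwise; this is readily checked against Table~\ref{tbl:psi} (for instance $\lambda=(2,2)$ in $\psi_4$ gives $(-1)^{3}\frac{24}{8}\binom{1}{1}=-3$). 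Granting the formula, the right-hand side is a nonzero integer for every $\lambda$ with $|\lambda|\le\nu$: one has $z_\lambda\mid\nu!$, and $\binom{\nu-|\lambda|+\ell(\lambda)-1}{\ell(\lambda)-1}\ge1$. Distinct partitions give distinct monomials, so (as $\psi_\nu$ has no constant term) $\#\psi_n$ equals the number of partitions of weight between $1$ and $n$, namely $\sum_{m=1}^{n}\PF(m)=\PFS(n)$.

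To discover and justify the formula I would pass to the exponential generating function $U(t)=\sum_{\nu\ge1}\frac{t^\nu}{\nu!}\,\psi_\nu(x_1,\ldots,x_\nu)$. Using $\bigl((p-1)!\bigr)^{p-1}=\prod_{a=1}^{p-1}(1+p\,q_p(a))=\sum_{k\ge0}p^k\,e_k\bigl(q_p(1),\ldots,q_p(p-1)\bigr)$ together with Newton's identities in exponential form, $\sum_{k\ge0}e_k(q)\,t^k=\exp\bigl(\sum_{k\ge1}\frac{(-1)^{k-1}}{k}Q_p(k)\,t^k\bigr)$, and then substituting $t=p$, one gets $\bigl((p-1)!\bigr)^{p-1}=\exp\bigl(\sum_{k\ge1}\frac{(-1)^{k-1}}{k}p^k Q_p(k)\bigr)$. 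Extracting the $(p-1)$-st root among the one-units of $\ZZ_p$ (legitimate because $\gcd(p-1,p)=1$, the branch being fixed by $-(p-1)!\equiv1\smod{p}$) gives $-(p-1)!=\exp\bigl(\frac{1}{p-1}\sum_{k\ge1}\frac{(-1)^{k-1}}{k}p^k Q_p(k)\bigr)$, that is $p\WQ_p=(p-1)!+1=1-\exp(\cdots)$. This strongly suggests the closed form
\[
  U(t)=1-\exp\biggl(\frac{1}{t-1}\sum_{k\ge1}\frac{(-1)^{k-1}}{k}\,x_k\,t^k\biggr).
\]
Expanding $\frac{1}{t-1}=-\sum_{h\ge0}t^h$ and grouping the monomials produced by the exponential according to their partition type, the coefficient of $x_\lambda$ in $U(t)$ comes out to $\frac{(-1)^{|\lambda|-1}}{z_\lambda}\cdot\frac{t^{|\lambda|}}{(1-t)^{\ell(\lambda)}}$; applying $\nu!\coeff{t^\nu}$ yields the displayed coefficient formula, and as a byproduct the fact that $\psi_\nu$ involves only monomials of weight $\le\nu$, which is what lies behind Theorem~\ref{thm:num}.

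The only genuine obstacle is rigor: since the $\psi_\nu$ are defined abstractly by the recurrence of Section~\ref{sec:prop}, one must establish that $U(t)=1-\exp\bigl(\tfrac{1}{t-1}L(t)\bigr)$, with $L(t):=\sum_{k\ge1}\tfrac{(-1)^{k-1}}{k}x_k t^k$, actually solves that recurrence --- equivalently, that the explicit coefficients above satisfy it. After substitution this should reduce to an elementary binomial identity in $\nu$, $|\lambda|$ and $\ell(\lambda)$, together with the integrality statement $z_\lambda\mid\nu!$. One technical point deserves attention: the factor $\frac{1}{t-1}$ is not weight-homogeneous, so the weight-$m$ part of $U(t)$ is the infinite tail $(-1)^{m-1}\sum_{r=1}^{m}\frac{1}{r!}\,\frac{t^m}{(1-t)^r}\sum\prod_{i=1}^r\frac{x_{k_i}}{k_i}$ (inner sum over compositions $k_1+\cdots+k_r=m$ with $k_i\ge1$), beginning at $t^m$; it is precisely the geometric tail $\frac{1}{(1-t)^r}$ that contributes the binomial coefficient, and this also accounts both for the empirically observed growth of the coefficients of $\psi_\nu$ with $\nu$ and for the sign pattern --- every monomial of weight $m$ occurs with sign $(-1)^{m-1}$ --- which by itself already precludes any cancellation in $\psi_\nu$.
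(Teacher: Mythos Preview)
The statement you address is a \emph{Conjecture} in the paper; the paper offers no proof, only numerical verification up to $n=30$. So there is no ``paper's own proof'' to compare against. Your approach, if completed, would go well beyond the paper: the generating function $U(t)=1-\exp\bigl(\tfrac{1}{t-1}L(t)\bigr)$ not only yields the explicit coefficient formula (which I have checked against every entry of Tables~\ref{tbl:psi} and~\ref{tbl:psi2}) but, upon setting $x_k=\pm1$ so that $L(t)=\pm\log(1+t)$, specializes to $1-(1+t)^{\mp1/(1-t)}$ and would thereby also settle the paper's closing conjecture on $\psi_n(\pm1,\ldots,\pm1)$.

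That said, as you yourself acknowledge, the proposal is not yet a proof. The $p$-adic derivation is heuristic: you must still show that the formal identity defining $U(t)$ in $\mathbb{Q}[x_1,x_2,\ldots][[t]]$, once specialized to $x_k=Q_p(k)$, can be evaluated at $t=p$ and agrees with the $p$-adic value $1-\exp\bigl(\tfrac{1}{p-1}L(p)\bigr)=p\,\WQ_p$; this is standard bookkeeping about composing convergent $p$-adic series, but it has to be written out. A second subtlety lies in the truncation needed to invoke the uniqueness in Theorem~\ref{thm:main}: at the borderline prime $p=n+1$ the tail term $\nu=p$ contributes $p^p/p!$, whose $p$-adic valuation is only $p-1=n$, one short of the required $n+1$; the induction on $\nu$ should therefore be run with primes $p>n+1$. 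Your alternative route---verifying the recurrence of Theorem~\ref{thm:psi-rec} directly---is possible in principle, but since $\Psi_n$ in~\eqref{eq:psi-sum} involves Bell polynomials of the $\psi_j$ weighted by Stirling numbers, the promised ``elementary binomial identity'' is likely to be less elementary than the $p$-adic argument. In short: the formula is almost certainly correct and the strategy sound, but the write-up stops exactly where the real work begins.
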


The rest of the paper is organized as follows. The next section introduces the 
Bell polynomials and elementary symmetric polynomials. Section~\ref{sec:proof} 
contains the proof of the main Theorem~\ref{thm:main}.
In the last Section~\ref{sec:prop}, we present the recurrence formula for $\psi_\nu$ 
in terms of Bell polynomials and show further properties. 
This results in a proof of Theorem~\ref{thm:num}. Subsequently, we state a 
conjecture about the sum of the coefficients of the polynomials $\psi_\nu$.


\section{Bell polynomials and Newton's identities}

For $n \geq 1$ and $1 \leq k \leq n$, the partial Bell polynomials $\BE_{n,k}$ 
are homogeneous polynomials of degree $k$. They are defined by
\begin{equation} \label{eq:bnk}
  \BE_{n,k}(x_1,\ldots,x_{n-k+1}) 
  = \sum_{\substack{j_1+2j_2+3j_3+\cdots = n\\j_1+j_2+j_3+\cdots = k}} 
  \frac{n!}{j_1! \cdots j_{n-k+1}!} 
  \prod_{\nu=1}^{n-k+1} \left(\frac{x_\nu}{\nu!}\right)^{j_\nu},
\end{equation}
which have integral coefficients. Moreover, $\BE_{n,k}$ contains $\PF(n,k)$ monomials, 
where $\PF(n, k)$ is the number of partitions of $n$ into $k$ summands. 
See Bell~\cite{Bell:1934} and Comtet~\cite[Chaps. 2.1, 3.3, 6.6]{Comtet:1974}. 
The generating function reads
\begin{equation} \label{eq:gf-bnk}
  \frac{1}{k!} \mleft( \sum_{n \geq 1} x_n \frac{t^n}{n!} \mright)^{\!\!\!k}
  = \sum_{n \geq k} \BE_{n,k}(x_1,\ldots,x_{n-k+1}) \frac{t^n}{n!}.
\end{equation}

The complete Bell polynomials $\BE_n$ are given by
\begin{align*}
  \BE_n(x_1,\ldots,x_n) &= \sum_{k=1}^{n} \BE_{n,k}(x_1,\ldots,x_{n-k+1}),
\shortintertext{satisfying the generating function}
  \exp \mleft( \sum_{n \geq 1} x_n \frac{t^n}{n!} \mright)
  &= 1 + \sum_{n \geq 1} \BE_n(x_1,\ldots,x_n) \frac{t^n}{n!}.
\end{align*}

For $k \geq 1$, let $\PF_k$ be the set of partitions of $k$, and let $\PF$ contain 
all partitions. Write any partition $\gamma \in \PF_k$ as an ascending ordered tuple 
$\gamma = (\gamma_1,\ldots,\gamma_{\ell})$ of length 
$\ell = |\gamma|$ and $k = \norm{\gamma}$ being its sum.
We write a monomial as
\[
  x_\gamma = \prod_{\nu=1}^{|\gamma|} x_{\gamma_\nu}.
\] 
Let $f, g \in \ZZ[x_1,x_2,\ldots]$. Write the polynomial $f$ as a finite representation
\[
  f = \sum_{\gamma \in \PF} c_\gamma \, x_\gamma \textq{with}
  c_\gamma \in \ZZ \setminus \set{0},
\]
where an empty sum is defined to be $0$. Define the maximum partition order as
\[
  \norm{f} = \max \set{\norm{\gamma} : x_\gamma \text{ is a monomial of } f}
\]
and $\norm{0} = 0$, obeying the strong triangle inequality such that
\[
  \norm{f+g} \leq \max (\norm{f}, \norm{g}).
\]
For example, we obtain for \eqref{eq:bnk} that
\begin{equation}\label{eq:bnk-ord}
  \norm{\BE_{n,k}(x_1,\ldots,x_{n-k+1})} = n.
\end{equation}

The elementary symmetric polynomials $\sigma_\nu$ in $n$ variables are defined by
\[
  \sigma_\nu = \sigma_\nu(x_1,\ldots,x_n)
  = \sum_{\substack{J \subseteq \set{1,\ldots,n}\\ |J| = \nu}} \prod_{j \in J} x_j
  \quad (1 \leq \nu \leq n)
\]
with $\sigma_0 = 1$. This follows from the generating function
\[
  \prod_{j=1}^{n} (1 + x_j \, t) = 1 + \sum_{\nu=1}^{n} \sigma_\nu \, t^\nu.
\]

Let $\pi_\nu$ denote the power sums in $n$ variables such that
\[
  \pi_\nu = \pi_\nu(x_1,\ldots,x_n) = x_1^\nu + \cdots + x_n^\nu
  \quad (1 \leq \nu \leq n). 
\]
The Newton identities establish a connection between the elementary symmetric 
polynomials $\sigma_\nu$ and the power sums $\pi_\nu$. 
To indicate the change of variables, we use the notation
\[
  \sigpi_\nu = \sigpi_\nu(\pi_1,\ldots,\pi_\nu)
  \quad (1 \leq \nu \leq n).
\]
Then the equality holds that
\[
  \sigma_\nu = \sigpi_\nu \quad (1 \leq \nu \leq n).
\]

With the help of the Bell polynomials, one finally gets the expressions
\begin{align}
  \sigpi_k &= \frac{(-1)^k}{k!} \, \BE_k(-\pi_1,-1! \, \pi_2,\ldots,-(k-1)! \, \pi_k) \label{eq:sigma-bell} \\
  &= (-1)^k \! \sum_{j_1+2j_2+3j_3+\cdots = k} \! \frac{(-1)^{j_1+\cdots+j_k}}{j_1! \cdots j_k!}
  \prod_{\nu=1}^{k} \left(\frac{\pi_\nu}{\nu}\right)^{j_\nu} \nonumber
\end{align}
for $k \geq 1$, where the polynomials are independent of $n$ (see Table~\ref{tbl:sigma}).
The following lemma results from the above definitions (cf.~\cite{Comtet:1974}).

\begin{lemma} \label{lem:sigma-pi}
For $k \geq 1$, the polynomial $\sigpi_k^\star = k!\,\sigpi_k$ in terms of $\pi_\nu$ 
has integral coefficients, ${\# \sigpi_k = |\PF_k|}$, and $\norm{\sigpi_k^\star} = k$. More precisely,
\[
  \sigpi_k = \frac{1}{k!} \sum_{\gamma \in \PF_k}
  c_\gamma \prod_{\nu=1}^{|\gamma|} \pi_{\gamma_\nu}
\]
with coefficients $c_\gamma \in \ZZ \setminus \set{0}$. In particular, 
for $k \geq 2$ we have
\begin{equation} \label{eq:sigpi}
  \sigpi_k^\star = \pi_1^k + \cdots + (-1)^{k-1} (k-1)! \, \pi_k,
\end{equation}
and the sum of the coefficients vanishes, namely, 
\[
  \sum_{\gamma \in \PF_k} c_\gamma = 0.
\]
\end{lemma}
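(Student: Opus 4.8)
The plan is to read off all four assertions directly from the generating-function identity \eqref{eq:sigma-bell} together with the explicit Bell-polynomial expansion \eqref{eq:bnk} and the partition-order calculus set up just above. First I would note that $\sigpi_k$, viewed as a polynomial in the variables $\pi_1,\ldots,\pi_k$, is obtained from $\BE_k(-0!\,\pi_1,\ldots,-(k-1)!\,\pi_k)$ up to the factor $(-1)^k/k!$; since $\BE_k=\sum_{j=1}^k\BE_{k,j}$ is a sum of $\sum_j \PF(k,j)=|\PF_k|$ monomials indexed exactly by the partitions of $k$, and the substitution $x_\nu\mapsto -(\nu-1)!\,\pi_\nu$ is monomial-wise (it sends $x_\gamma$ to a nonzero scalar times $\prod_\nu \pi_{\gamma_\nu}$), no two distinct partitions collapse to the same monomial in the $\pi$'s. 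Hence $\sigpi_k=\frac{1}{k!}\sum_{\gamma\in\PF_k}c_\gamma\prod_\nu\pi_{\gamma_\nu}$ with each $c_\gamma\neq 0$, giving $\#\sigpi_k=|\PF_k|$. For integrality of $\sigpi_k^\star=k!\,\sigpi_k$: from the second line of \eqref{eq:sigma-bell}, clearing the $k!$ turns the coefficient of $\prod_\nu(\pi_\nu/\nu)^{j_\nu}$ into $\pm\,k!/(j_1!\cdots j_k!)\cdot\prod_\nu \nu^{-j_\nu}$, and the standard fact that $\BE_{k,j}$ has integer coefficients (stated after \eqref{eq:bnk}) forces $\frac{k!}{j_1!\cdots j_k!}\prod_\nu(1/\nu!)^{j_\nu}\in\ZZ$; comparing, the discrepancy is a product of $\nu!/\nu=(\nu-1)!$ factors, all integers, so $\sigpi_k^\star\in\ZZ[\pi_1,\ldots,\pi_k]$.

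Next, for $\norm{\sigpi_k^\star}=k$: every monomial of $\sigpi_k$ corresponds to a partition $\gamma\vdash k$, and the monomial $\prod_\nu\pi_{\gamma_\nu}$ has partition order $\norm{\gamma}=\sum_\nu\gamma_\nu=k$ by definition; so every monomial has order exactly $k$ and the max is $k$. (Alternatively this is \eqref{eq:bnk-ord} transported through the monomial substitution, which preserves indices.) For the explicit endpoints in \eqref{eq:sigpi}: the partition $\gamma=(1,1,\ldots,1)$ comes solely from $\BE_{k,k}=x_1^k$, and tracking the scalars $(-1)^k/k!\cdot(-\pi_1)^k\cdot$ (the multinomial coefficient, which is $1$ here) yields the $\pi_1^k$ term with coefficient $+1$ after multiplying by $k!$; the partition $\gamma=(k)$ comes solely from $\BE_{k,1}=x_k$, and the scalar chain $(-1)^k/k!\cdot\bigl(-(k-1)!\,\pi_k\bigr)$ times $k!$ gives $(-1)^{k-1}(k-1)!\,\pi_k$. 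These are the only two partitions of $k$ into all-equal-or-single parts, so no other monomial interferes.

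Finally, for the vanishing of $\sum_{\gamma\in\PF_k}c_\gamma$ when $k\geq 2$: the sum of the coefficients of $\sigpi_k^\star$ as a polynomial in the $\pi_\nu$ is obtained by evaluating at $\pi_1=\pi_2=\cdots=1$, i.e. $\sum_\gamma c_\gamma=\sigpi_k^\star\big|_{\pi_\nu=1}=k!\,\sigpi_k(1,\ldots,1)$. Now choose the concrete specialization $x_1=x_2=\cdots=x_n=t$ (say with $n=k$), so that $\pi_\nu=n\,t^\nu$ and $\sigma_\nu=\binom{n}{\nu}t^\nu$; more cleanly, set $n=1$, giving $\pi_\nu=1$ for all $\nu$ while $\sigma_\nu=0$ for $\nu\geq 2$ (there is only one variable). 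Since $\sigpi_\nu$ is the universal polynomial with $\sigpi_\nu(\pi_1,\ldots,\pi_\nu)=\sigma_\nu$ independently of the number of variables, evaluating at this one-variable point gives $\sigpi_k(1,\ldots,1)=\sigma_k=0$ for $k\geq 2$, hence $\sum_{\gamma}c_\gamma=k!\cdot 0=0$. I expect the only mildly delicate point is bookkeeping the signs and the factorial factors in the substitution $x_\nu\mapsto -(\nu-1)!\,\pi_\nu$ when pinning down \eqref{eq:sigpi} exactly; everything else is a direct unwinding of definitions, and the $n=1$ trick disposes of the coefficient-sum identity cleanly.
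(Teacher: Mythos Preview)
Your argument is correct and in fact supplies considerably more detail than the paper, which offers no proof at all beyond the sentence ``The following lemma results from the above definitions (cf.~\cite{Comtet:1974}).'' Your route---reading integrality off the known integrality of the Bell coefficients via the substitution $x_\nu\mapsto -( \nu-1)!\,\pi_\nu$, counting monomials by the partition indexing of $\BE_k$, extracting the two extreme terms $\pi_1^k$ and $(-1)^{k-1}(k-1)!\,\pi_k$ directly, and killing the coefficient sum by the one-variable specialisation---is a clean unpacking of exactly those ``above definitions.''

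One small point to tighten: your vanishing-sum argument invokes $\sigpi_k(\pi_1,\ldots,\pi_k)=\sigma_k$ with $n=1$ and $k\geq 2$, whereas the paper only records this identity for $1\leq \nu\leq n$. The extension to $k>n$ (with the convention $\sigma_k=0$) is standard and true, but if you want to stay entirely inside what the paper has set up, you can bypass it: evaluating $\sigpi_k^\star$ at $\pi_\nu=1$ amounts to computing $(-1)^k\BE_k(-0!,-1!,\ldots,-(k-1)!)$, and the exponential generating function $\exp\bigl(-\sum_{\nu\geq 1}t^\nu/\nu\bigr)=1-t$ shows immediately that $\BE_k$ at this point vanishes for $k\geq 2$. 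Either way the conclusion stands.
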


\begin{table}[H] \small
\setstretch{1.25}
\begin{center}
\begin{tabular}{r@{\;=\;}l}
  \toprule
  $\sigpi_1$ & $\pi_1$ \\
  $\sigpi_2$ & $\frac{1}{2}(\pi_1^2 - \pi_2)$ \\ 
  $\sigpi_3$ & $\frac{1}{3!}(\pi_1^3 - 3 \pi_1 \pi_2 + 2 \pi_3)$ \\
  $\sigpi_4$ & $\frac{1}{4!}(\pi_1^4 - 6 \pi_1^2 \pi_2 + 8 \pi_1 \pi_3 + 3 \pi_2^2 - 6 \pi_4)$ \\
  $\sigpi_5$ & $\frac{1}{5!}(\pi_1^5 - 10 \pi_1^3 \pi_2 + 20 \pi_1^2 \pi_3 + 15 \pi_1 \pi_2^2 - 30 \pi_1 \pi_4 - 20 \pi_2 \pi_3 + 24 \pi_5)$ \\
  \bottomrule
\end{tabular}

\caption{First few polynomials $\sigpi_k$ in terms of $\pi_\nu$.}
\label{tbl:sigma}
\end{center}
\end{table}


\section{Proof of the main theorem}
\label{sec:proof}

Recall $Q_p$ in \eqref{eq:Qp} as the power sums of $q_p$. We use the notation 
\begin{align}
  \sigma_\nu(q_p) &= \sigma_\nu( q_p(1), \ldots, q_p(p-1)), \nonumber \\
  \sigpi_\nu(Q_p) &= \sigpi_\nu( Q_p(1), \ldots, , Q_p(\nu)) \label{eq:sigpi-Qp}
\end{align}
for the elementary symmetric and power sum polynomials, respectively. 
We need the following lemmas and theorems to give a proof of Theorem~\ref{thm:main}
at the end of this section.

\begin{lemma}
Let $p$ be an odd prime. Then we have 
\[
  \prod_{a=1}^{p-1} (1 + p \, q_p(a)) = (1 - p \WQ_p)^{p-1},
\]
which gives the expansions
\begin{equation} \label{eq:wp-sigma}
  \sum_{\nu=0}^{p-1} p^\nu \sigma_\nu(q_p)
  = \sum_{\nu=0}^{p-1} \binom{p-1}{\nu} (-1)^\nu p^\nu \, \WQ_p^\nu.
\end{equation}
\end{lemma}

\begin{proof}
Note that $p-1$ is even. Expanding the product $\prod_{a=1}^{p-1} a^{p-1} = (p-1)!^{p-1}$
in conjunction with \eqref{eq:wp} and \eqref{eq:qp} provides the desired products
and their expansions.
\end{proof}

Let $\ZZ_p$ be the ring of $p$-adic integers. We consider the $p$-adic expansion 
\[
  a = \alpha_0 + \alpha_1 \, p + \alpha_2 \, p^2 + \cdots
\]
with prescribed $\alpha_\nu \in \ZZ_p$ for $\nu \geq 0$, where the $\alpha_\nu$ are 
given by algebraic expressions. Define the linear operator $\coeff{p^\ell}$ giving 
the expression at $p^\ell$ such that $\coeff{p^\ell} \, a = \alpha_\ell$. 

\begin{theorem}
Let $n \geq 2$ and $p > n$ be an odd prime. Compute
\begin{equation} \label{eq:wp-1}
  \WQ_{p,1} \equiv Q_p(1) \smod{p},
\end{equation}
and iteratively for $\ell=2,\ldots,n$, compute
\begin{equation} \label{eq:wp-ell}
  \WQ_{p,\ell} \equiv Q_p(1) + p \WQ_{p,\ell-1} +
  \sum_{\nu=1}^{\ell-1} p^\nu \mleft( \sigpi_{\nu+1}(Q_p)
  + \binom{p-1}{\nu+1} (-1)^{\nu} \, \WQ_{p,\ell-\nu}^{\nu+1} \mright) \smod{p^\ell}.
\end{equation}
Then we have
\[
  \WQ_p \equiv \WQ_{p,n} \smod{p^n}.
\]
\end{theorem}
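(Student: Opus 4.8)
The plan is to extract from the preceding lemma an \emph{exact} identity satisfied by the integer $\WQ_p$, and then to show by induction on $\ell$ that the recursion \eqref{eq:wp-ell} reproduces this identity modulo $p^\ell$.

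\textbf{Step 1 (exact identity).} Since $q_p(1),\ldots,q_p(p-1)$ are $p-1$ integers, Newton's identities $\sigma_\nu = \sigpi_\nu$ give $\sigma_\nu(q_p) = \sigpi_\nu(Q_p)$ for every $1 \le \nu \le p-1$, and each of these equals the integer $\sigma_\nu(q_p)$. Substituting into \eqref{eq:wp-sigma}, cancelling the two terms with $\nu = 0$ (both equal to $1$), dividing by $p$, using that after this division the $\nu=1$ term on the right is $\binom{p-1}{1}(-1)\WQ_p = -(p-1)\WQ_p = \WQ_p - p\WQ_p$ while the $\nu=1$ term on the left is $\sigma_1(q_p) = Q_p(1)$, and finally reindexing $\nu \mapsto \nu+1$ together with $-(-1)^{\nu+1} = (-1)^\nu$, one obtains the identity
\[
  \WQ_p = Q_p(1) + p\WQ_p + \sum_{\nu=1}^{p-2} p^\nu \mleft( \sigpi_{\nu+1}(Q_p) + \binom{p-1}{\nu+1} (-1)^\nu \, \WQ_p^{\nu+1} \mright)
\]
in $\ZZ$.

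\textbf{Step 2 (truncation).} Fix $1 \le \ell \le n$. As $p > n \ge \ell$, every index $\nu+1$ occurring below satisfies $\nu+1 \le n \le p-1$, so $\sigpi_{\nu+1}(Q_p)$ is, by Step 1, a well-defined integer equal to $\sigma_{\nu+1}(q_p)$, and the factorial $(\nu+1)!$ appearing in its fractional representation is a unit modulo $p^\ell$. Moreover every summand of the identity of Step 1 with $\nu \ge \ell$ is divisible by $p^\ell$, whence
\[
  \WQ_p \equiv Q_p(1) + p\WQ_p + \sum_{\nu=1}^{\ell-1} p^\nu \mleft( \sigpi_{\nu+1}(Q_p) + \binom{p-1}{\nu+1} (-1)^\nu \, \WQ_p^{\nu+1} \mright) \smod{p^\ell}.
\]
For $\ell = 1$ the sum is empty and this reads $\WQ_p \equiv Q_p(1) \smod{p}$, consistent with \eqref{eq:wp-1}.

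\textbf{Step 3 (induction on $\ell$).} We prove $\WQ_{p,\ell} \equiv \WQ_p \smod{p^\ell}$ for $1 \le \ell \le n$; the case $\ell = n$ is the assertion. The case $\ell = 1$ is the congruence just recorded. Assume $\WQ_{p,j} \equiv \WQ_p \smod{p^j}$ for all $1 \le j < \ell$, and compare the defining congruence \eqref{eq:wp-ell} of $\WQ_{p,\ell}$ with the displayed congruence of Step 2 term by term. The summands $Q_p(1)$ and $\sigpi_{\nu+1}(Q_p)$ agree literally. Since $\WQ_{p,\ell-1} \equiv \WQ_p \smod{p^{\ell-1}}$, multiplication by $p$ gives $p\WQ_{p,\ell-1} \equiv p\WQ_p \smod{p^\ell}$. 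For $1 \le \nu \le \ell-1$ we have $1 \le \ell-\nu \le \ell-1$, so the hypothesis gives $\WQ_{p,\ell-\nu} \equiv \WQ_p \smod{p^{\ell-\nu}}$, hence $\WQ_{p,\ell-\nu}^{\nu+1} \equiv \WQ_p^{\nu+1} \smod{p^{\ell-\nu}}$, and multiplying by the integer $p^\nu \binom{p-1}{\nu+1}(-1)^\nu$ yields agreement modulo $p^\ell$. Adding these congruences gives $\WQ_{p,\ell} \equiv \WQ_p \smod{p^\ell}$, completing the induction and the proof.

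The argument is largely bookkeeping; the two points that deserve care are the sign and reindexing manipulations in Step 1, which must reproduce exactly the shape of \eqref{eq:wp-ell}, and the observation used in Step 3 that the explicit factor $p^\nu$ in the $\nu$-th summand lowers the precision to which $\WQ_p$ is needed there from $p^\ell$ to $p^{\ell-\nu}$ — this is precisely what makes \eqref{eq:wp-ell} a well-founded recursion and is the mechanism the induction exploits.
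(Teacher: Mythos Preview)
Your proof is correct and follows essentially the same route as the paper: both start from \eqref{eq:wp-sigma}, cancel the $\nu=0$ terms, divide by $p$, rearrange to isolate $\WQ_p$ on the left, and then exploit the key observation that $p^\nu\WQ_p \equiv p^\nu\WQ_{p,\ell-\nu}\pmod{p^\ell}$ to justify the iterative substitution. The only cosmetic difference is that you first record the exact identity in $\ZZ$ (with the sum running to $\nu=p-2$) and then truncate at each $\ell$, whereas the paper truncates once modulo $p^n$ and specializes; the substance is identical.
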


\begin{proof}
We rewrite \eqref{eq:wp-sigma} as follows. Remove the constant term $1$ for 
${\nu = 0}$, divide by $p$, and shift the index $\nu \mapsto \nu+1$ on both sides.
Since $p > n$, we arrive at the congruence
\[
  \sum_{\nu=0}^{n-1} p^\nu \sigma_{\nu+1}(q_p)
  \equiv \sum_{\nu=0}^{n-1} \binom{p-1}{\nu+1} (-1)^{\nu+1} p^\nu \, \WQ_p^{\nu+1} \smod{p^n}.
\]
We have the identity $\sigma_\nu(q_p) = \sigpi_\nu(Q_p)$.
After some rearranging of terms, we derive that
\begin{equation} \label{eq:wp-n}
  \WQ_p \equiv Q_p(1) + p \WQ_p +
  \sum_{\nu=1}^{n-1} p^\nu \mleft( \sigpi_{\nu+1}(Q_p)
  + \binom{p-1}{\nu+1} (-1)^{\nu} \, \WQ_p^{\nu+1} \mright) \smod{p^n}.
\end{equation}
In the context of the above congruence, we set
\[
  \WQ_{p,\ell} \equiv \WQ_p \smod{p^\ell}
\]
for $\ell = 1, \ldots, n$. For $\ell = 1$, we obtain~\eqref{eq:wp-1}, 
which corresponds to Lerch's congruence~\eqref{eq:wp-qp}. Note that 
\[
  p^\nu \, \WQ_p \equiv p^\nu \, \WQ_{p,\ell-\nu} \smod{p^\ell}.
\]
For each step $\ell = 2,\ldots,n$, we can iteratively substitute such terms 
of $\WQ_p$ in this context with $\WQ_{p,\ell-\nu}$, being computed before, 
on the right-hand side of \eqref{eq:wp-n}. 
This finally leads to \eqref{eq:wp-ell} as desired.
\end{proof}

For $\nu \geq 1$, let 
\[
  \psi_\nu = \psi_\nu(x_1,\ldots,x_\nu) \in \ZZ[x_1,\ldots,x_\nu] 
\]
be multivariate polynomials. Similar to \eqref{eq:sigpi-Qp}, we write $\psi_\nu(Q_p)$.
Let $(n)_{\nu}$ denote the falling factorial such that $\binom{n}{\nu} = (n)_{\nu} / \nu!$.

\begin{lemma} \label{lem:p-sum-pow}
Let $n \geq k \geq 1$ and $p > n$ be an odd prime. Set $m=n-k+1$ and let $0 \leq r \leq k$.
For $k+r \leq \ell \leq n+r$, we have the identity
\[
  \coeff{p^\ell} p^r \frac{n!}{k!}
  \mleft( \sum_{\nu=1}^{m} \frac{p^\nu}{\nu!} \, \psi_\nu \mright)^{\!\!k}
  = \frac{n!}{(\ell-r)!} \, \BE_{\ell-r,k}(\psi_1,\ldots,\psi_{\ell-r-k+1})
\]
with integral coefficients, which vanishes for $0 \leq \ell < k+r$.
\end{lemma}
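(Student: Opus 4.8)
The plan is to read the identity off the generating function \eqref{eq:gf-bnk} for the partial Bell polynomials. In \eqref{eq:gf-bnk} I would specialize the indeterminates $x_\nu\mapsto\psi_\nu$ and rename the formal variable $t$ as $p$; since $p>n\ge m$, every $\nu!$ with $\nu\le m$ and $k!$ are units modulo $p$, so the resulting expression expands into a series in powers of $p$ with coefficients in $\ZZ_{(p)}[x_1,x_2,\ldots]$, and $\coeff{p^\ell}$ merely extracts the coefficient of $p^\ell$. Multiplying \eqref{eq:gf-bnk} through by $p^r\,n!$ gives
\[
  p^r \frac{n!}{k!}\mleft(\sum_{\nu\ge1}\frac{p^\nu}{\nu!}\,\psi_\nu\mright)^{\!\!k}
  = \sum_{j\ge k}\frac{n!}{j!}\,\BE_{j,k}(\psi_1,\ldots,\psi_{j-k+1})\,p^{\,j+r},
\]
and comparing coefficients of $p^\ell$ forces $j=\ell-r$: the coefficient is $0$ when $\ell-r<k$ (equivalently $\ell<k+r$), because the right-hand sum begins at $j=k$, and it equals $\frac{n!}{(\ell-r)!}\,\BE_{\ell-r,k}(\psi_1,\ldots,\psi_{\ell-r-k+1})$ when $\ell\ge k+r$.

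Next I would justify passing from the infinite series to the truncation $\sum_{\nu=1}^{m}$ with $m=n-k+1$. Writing $S=\sum_{\nu=1}^{m}\frac{p^\nu}{\nu!}\psi_\nu$ and $S_\infty=\sum_{\nu\ge1}\frac{p^\nu}{\nu!}\psi_\nu$, any monomial occurring in the expansion of $S_\infty^{\,k}$ but not in that of $S^{\,k}$ carries a factor of index $\nu\ge m+1=n-k+2$ together with $k-1$ further factors each of $p$-order at least $1$, hence has $p$-order at least $n+1$. Thus $p^{\,n+1}$ divides $S_\infty^{\,k}-S^{\,k}$, so $p^{\,n+r+1}$ divides $p^r\frac{n!}{k!}(S_\infty^{\,k}-S^{\,k})$; consequently, for every $\ell\le n+r$ the coefficient of $p^\ell$ is unaffected by the truncation, and the computation above is valid for the truncated sum throughout the range $0\le\ell\le n+r$, including the asserted vanishing for $\ell<k+r$.

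Finally, for integrality, the constraints $k+r\le\ell\le n+r$ give $0\le k\le\ell-r\le n$, so $\frac{n!}{(\ell-r)!}=n(n-1)\cdots(\ell-r+1)$ is a (possibly empty) product of integers; since the partial Bell polynomials have integral coefficients and each $\psi_\nu\in\ZZ[x_1,\ldots,x_\nu]$, the right-hand side is a polynomial over $\ZZ$, and $\ell-r\le n$ also ensures that only $\psi_1,\ldots,\psi_{\ell-r-k+1}$ with $\ell-r-k+1\le m$ actually occur, consistent with the truncated sum. I do not expect a real obstacle here: the only points needing care are the identification of $\coeff{p^\ell}$ with the formal coefficient of $p^\ell$ (legitimate precisely because $p>n$) and the truncation estimate, i.e.\ verifying that the discarded monomials contribute only to powers of $p$ strictly above $p^{\,n+r}$.
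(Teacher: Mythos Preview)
Your proposal is correct and follows essentially the same route as the paper: both read the identity off the generating function \eqref{eq:gf-bnk} after the substitution $x_\nu\mapsto\psi_\nu$, $t\mapsto p$, and both note that $\ell-r-k+1\le m$ so only $\psi_1,\ldots,\psi_m$ actually appear. The one cosmetic difference is in handling the truncation: the paper sets $y_\nu=\psi_\nu$ for $\nu\le m$ and $y_\nu=0$ otherwise, so the finite sum is literally the infinite one and no separate estimate is needed, whereas you keep the full series and bound the $p$-order of the discarded monomials; both arguments are valid and amount to the same observation.
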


\begin{proof}
We set $y_\nu = \psi_\nu$ for $\nu = 1,\ldots,m$, and $y_\nu = 0$ otherwise. 
For $k+r \leq \ell \leq n+r$, we then infer from \eqref{eq:gf-bnk} that
\[
  \coeff{p^\ell} p^r \frac{n!}{k!}
  \mleft( \sum_{\nu \geq 1} \frac{p^\nu}{\nu!} \, y_\nu \mright)^{\!\!k}
  = \frac{n!}{(\ell-r)!} \, \BE_{\ell-r,k}(y_1,\ldots,y_{\ell-r-k+1}),
\]
having integral coefficients. Since $\ell -r \leq n$, so $\ell-r-k+1 \leq m$,
we have $y_\nu = \psi_\nu$ on the right-hand side above. 
For ${0 \leq \ell < k+r}$, the terms, shifted by $p^r$, 
vanish by the right-hand side of~\eqref{eq:gf-bnk}.
\end{proof}

\begin{theorem} \label{thm:wpl-psi}
Let $n \geq 2$ and $p > n$ be an odd prime. For $\ell = 1, \ldots, n$, we have
\begin{equation} \label{eq:wpl-psi}
  \WQ_{p,\ell} \equiv \sum_{\nu=1}^{\ell} \frac{p^{\nu-1}}{\nu!} \, \psi_\nu(Q_p) \smod{p^\ell},
\end{equation}
where $\psi_1 = x_1$ and recursively for $\nu = 2, \ldots, n$,
\begin{equation} \label{eq:psi-rec}
  \psi_\nu = \nu \, \psi_{\nu-1} + \sigpi_{\nu}^\star + \text{terms of } \psi_1, \ldots, \psi_{\nu-1},
\end{equation}
which have no constant term.
\end{theorem}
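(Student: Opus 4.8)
The plan is to prove \eqref{eq:wpl-psi} together with the recursion \eqref{eq:psi-rec} by induction on $\ell$, feeding the ansatz \eqref{eq:wpl-psi} into the iterative congruence \eqref{eq:wp-ell} and matching coefficients of powers of $p$. First I would check the base case $\ell=1$: by \eqref{eq:wp-1} we have $\WQ_{p,1}\equiv Q_p(1)\smod p$, so setting $\psi_1=x_1$ gives exactly the $\nu=1$ term of \eqref{eq:wpl-psi}, and $\psi_1$ has no constant term. For the inductive step, assume \eqref{eq:wpl-psi} holds for all indices $<\ell$ with polynomials $\psi_1,\ldots,\psi_{\ell-1}$ already constructed. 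Substitute the inductive expressions for $\WQ_{p,\ell-1}$ and for each $\WQ_{p,\ell-\nu}$ into the right-hand side of \eqref{eq:wp-ell}.

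The key computation is to expand the three groups of terms on the right of \eqref{eq:wp-ell} as $p$-adic series and collect the coefficient of each $p^{\ell'-1}$ for $\ell'=1,\ldots,\ell$. The term $Q_p(1)$ contributes $\psi_1$ at level $p^0$. The term $p\,\WQ_{p,\ell-1}$ contributes $p\sum_{\nu=1}^{\ell-1}\frac{p^{\nu-1}}{\nu!}\psi_\nu(Q_p)=\sum_{\nu=1}^{\ell-1}\frac{p^{\nu}}{\nu!}\psi_\nu(Q_p)$, i.e.\ after reindexing it contributes $\nu\,\psi_{\nu-1}/\nu!$ at level $p^{\nu-1}$, which is the source of the leading $\nu\,\psi_{\nu-1}$ in \eqref{eq:psi-rec}. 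The term $\sum_\nu p^\nu\sigpi_{\nu+1}(Q_p)$ contributes $\sigpi_{\nu}(Q_p)=\sigpi_{\nu}^\star(Q_p)/\nu!$ at level $p^{\nu-1}$, yielding the $\sigpi_\nu^\star$ summand (here Lemma~\ref{lem:sigma-pi} guarantees $\sigpi_\nu^\star\in\ZZ[\pi_1,\ldots,\pi_\nu]$ with no constant term). Finally the term $\sum_\nu p^\nu\binom{p-1}{\nu+1}(-1)^\nu\WQ_{p,\ell-\nu}^{\nu+1}$ is handled by Lemma~\ref{lem:p-sum-pow}: writing $\WQ_{p,\ell-\nu}\equiv\sum_{j\ge1}\frac{p^{j-1}}{j!}\psi_j(Q_p)$, the power $\WQ_{p,\ell-\nu}^{\nu+1}$ expands via the Bell polynomial generating function \eqref{eq:gf-bnk}, so that $\coeff{p^{\ell'-1}}$ of the whole sum is an explicit $\ZZ$-linear combination of $\BE_{m,k}(\psi_1,\ldots)$ — a polynomial in $\psi_1,\ldots,\psi_{\ell'-1}$ with integral coefficients and no constant term, since each $\BE_{m,k}$ with $k\ge2$ involves only $\psi_j$ with $j\le\ell'-2<\ell'-1$ and is homogeneous of positive degree (note $\binom{p-1}{\nu+1}$ is a $p$-adic integer because $p>n\ge\ell'$). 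Matching the coefficient of $p^{\nu-1}$ on both sides of \eqref{eq:wp-ell} for $\nu=2,\ldots,\ell$ then \emph{defines} $\psi_\nu$ by \eqref{eq:psi-rec}; the "terms of $\psi_1,\ldots,\psi_{\nu-1}$" are exactly the Bell-polynomial contributions just described, and one checks they lie in $\ZZ[x_1,\ldots,x_{\nu-1}]$ and carry no constant term, so $\psi_\nu\in\ZZ[x_1,\ldots,x_\nu]$ (the only genuinely new variable $x_\nu$ enters solely through $\sigpi_\nu^\star$, cf.\ \eqref{eq:sigpi}) and has no constant term.

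The main obstacle I anticipate is bookkeeping the $p$-adic levels correctly: one must be careful that substituting $\WQ_{p,\ell-\nu}$ (valid only $\smod p^{\ell-\nu}$) into a term multiplied by $p^\nu$ is legitimate $\smod p^\ell$, that the truncation ranges in Lemma~\ref{lem:p-sum-pow} ($k+r\le\ell\le n+r$ with $r=\nu$, $k=\nu+1$) are compatible with the summation bounds in \eqref{eq:wp-ell}, and that the definition of $\psi_\nu$ obtained at step $\ell=\nu$ does not change at later steps $\ell>\nu$ — i.e.\ the recursion is consistent and independent of $n$. This last point follows because at level $p^{\nu-1}$ only terms with $p$-power index $\le\nu-1$ contribute, so the coefficient matching at that level is the same regardless of how large $\ell$ (hence $n$) is chosen; uniqueness of $\psi_\nu$ is then immediate from the fact that $Q_p(1),\ldots,Q_p(\nu)$ can be taken as independent indeterminates as $p$ ranges over primes $>n$, or more cleanly by observing that \eqref{eq:psi-rec} determines $\psi_\nu$ outright from $\psi_1,\ldots,\psi_{\nu-1}$ and $\sigpi_\nu^\star$.
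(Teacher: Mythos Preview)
Your plan is correct and follows essentially the same route as the paper's own proof: induction on $\ell$, substitution of the ansatz \eqref{eq:wpl-psi} into the iteration \eqref{eq:wp-ell}, and use of Lemma~\ref{lem:p-sum-pow} to express the power terms $\WQ_{p,\ell-\nu}^{\nu+1}$ through Bell polynomials in $\psi_1,\ldots,\psi_{\ell-1}$. The one place to be slightly more careful than you indicate is that $\binom{p-1}{\nu+1}$ is not merely a $p$-adic integer but a genuine polynomial in $p$ (via the Stirling numbers of the first kind), and this expansion must be fed into the $\coeff{p^{\ell'-1}}$ extraction before Lemma~\ref{lem:p-sum-pow} applies; the paper sidesteps this in the present proof by working only at the top level $p^{\ell-1}$ modulo $p$ and then invoking ``holds for all $p>n$, hence in $\ZZ$'', deferring the exact Stirling-number bookkeeping to Theorem~\ref{thm:psi-rec}.
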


\begin{proof}
We use proof by induction. By \eqref{eq:wp-1}, we infer for $\ell = 1$ that
\[
  \WQ_{p,1} \equiv \psi_1(Q_p) \smod{p} \textq{with} \psi_1 = x_1. 
\]
Let $\ell \in \set{2,\ldots,n}$ and assume that \eqref{eq:wpl-psi} holds for 
$\ell-1, \ldots, 1$. From \eqref{eq:wp-ell}, it follows that
\[
  \WQ_{p,\ell} \equiv \psi_1(Q_p) + p \WQ_{p,\ell-1} +
  \sum_{\nu=1}^{\ell-1} p^\nu \mleft( \sigpi_{\nu+1}(Q_p)
  + \binom{p-1}{\nu+1} (-1)^{\nu} \, \WQ_{p,\ell-\nu}^{\nu+1} \mright) \smod{p^\ell}.
\]
We substitute the terms $\WQ_{p,\ell-\nu}$ for $\nu \geq 1$ by \eqref{eq:wpl-psi}. 
After some rewriting, we thus obtain

\begin{align}
  \WQ_{p,\ell} &\equiv\; \psi_1(Q_p)
  + \sum_{\nu=1}^{\ell-1} p^\nu \mleft( \frac{\psi_\nu(Q_p)}{\nu!} + \frac{\sigpi_{\nu+1}^\star(Q_p)}{(\nu+1)!}
  + S_{p,\ell,\nu} \mright) \smod{p^\ell}, \label{eq:wp-s1} \\
\shortintertext{where}
  S_{p,\ell,\nu} &\equiv (-1)^{\nu} \frac{(p-1)_{\nu+1}}{(\nu+1)!}
  \mleft( \sum_{j=1}^{\ell-\nu} \frac{p^{j-1}}{j!} \, \psi_j(Q_p) \mright)^{\!\!\nu+1} \!\! \smod{p^{\ell-\nu}}.
  \label{eq:wp-s2}
\end{align}

By assumption, we have
\[
  \WQ_{p,\ell} \equiv \sum_{\nu=1}^{\ell-1} \frac{p^{\nu-1}}{\nu!} \, \psi_\nu(Q_p) \smod{p^{\ell-1}}.
\]
Therefore, we have to collect terms, denoted as $T_{p,\ell}$, in context of $p^{\ell-1}$ such that
\[
  \WQ_{p,\ell} \equiv \sum_{\nu=1}^{\ell-1} \frac{p^{\nu-1}}{\nu!} \, \psi_\nu(Q_p)
  + \frac{p^{\ell-1}}{\ell!} T_{p,\ell} \smod{p^{\ell}},
\]
while higher terms with $p^{\ell+j}$ for $j \geq 0$ vanish.
Considering \eqref{eq:wp-s1} and \eqref{eq:wp-s2}, we further have to pick out terms 
of $\coeff{p^{\ell-1-\nu}} S_{p,\ell,\nu}$, which give a contribution to $T_{p,\ell}$. 
We then deduce that
\begin{equation} \label{eq:tpl}
  T_{p,\ell} \equiv \ell \, \psi_{\ell-1}(Q_p) + \sigpi_{\ell}^\star(Q_p) 
  + \ell! \sum_{\nu=1}^{\ell-1} \coeff{p^{\ell-1-\nu}} S_{p,\ell,\nu} \smod{p}.
\end{equation}

Now, we evaluate the terms involving $S_{p,\ell,\nu}$.
The case $\nu=\ell-1$ easily reduces to
\[
  \ell! \coeff{p^0} S_{p,\ell,\ell-1} \equiv -\ell! \, \psi_1^\ell(Q_p) \smod{p}.
\]
For the other cases, we shall simplify notation. Therefore, by shifting the index 
$\nu \mapsto \nu-1$, we need to handle $\nu = 2, \ldots, \ell-1$, as follows. 
Fix $\nu$ and set $m=\ell-\nu+1$. 
\begin{equation} \label{eq:spl}
  \ell! \coeff{p^{\ell-\nu}} S_{p,\ell,\nu-1}
  \equiv \ell! \coeff{p^{\ell}} p^\nu S_{p,\ell,\nu-1}
  \equiv \coeff{p^{\ell}} (-1)^{\nu-1} (p-1)_\nu \frac{\ell!}{\nu!} 
  \mleft( \sum_{j=1}^{m} \frac{p^j}{j!} \, \psi_j(Q_p) \mright)^{\!\!\nu}
  \!\! \smod{p}.
\end{equation}
Applying Lemma~\ref{lem:p-sum-pow}, we conclude that the above expression
has integral coefficients and depends on $\psi_1(Q_p), \ldots, \psi_{\ell-1}(Q_p)$.
Combining with \eqref{eq:tpl}, this shows that 
\[
  T_{p,\ell} \equiv \psi_\ell(Q_p) \smod{p}
\]
with some $\psi_\ell \in \ZZ[x_1,\ldots,x_\ell]$.
The determination of $\psi_\ell$ is independent of $p$ and $Q_p$.
Since congruence \eqref{eq:tpl} holds for all and infinitely many $p > n$, 
so it also holds in $\ZZ$ such that
\[
  \psi_\ell = \ell \, \psi_{\ell-1} + \sigpi_{\ell}^\star + \text{terms of } \psi_1, \ldots, \psi_{\ell-1}.
\]
By construction, $\psi_\ell$ has no constant term.
This shows the induction and completes the proof.
\end{proof}

We are now ready to give a proof of the main theorem. Note that
an exact recurrence formula is given by Theorem~\ref{thm:psi-rec} below.

\begin{proof}[Proof of Theorem~\ref{thm:main}]
By Theorem~\ref{thm:wpl-psi} and its proof, the polynomials $\psi_\nu$ for $\nu \geq 1$ 
can be determined independently of $p$ and $Q_p$, and they have a recurrence relation 
given by~\eqref{eq:psi-rec}. For ${n \geq 1}$ and ${p > n}$ an odd prime, the congruence 
of $\WQ_p \smod{p^n}$ follows from~\eqref{eq:wpl-psi}.
Equivalently, by \eqref{eq:wp} we obtain the congruence of $(p-1)! \smod{p^{n+1}}$.
\end{proof}


\section{Properties of the multivariate polynomials}
\label{sec:prop}

Recall the definitions of the former sections.
For $n \geq 1$ and $1 \leq k \leq n$, let $\ST_1(n,k)$ and  $\ST_2(n,k)$ be 
the Stirling number of the first and second kind, respectively.
These numbers are defined by
\[
  \sum_{k=1}^{n} \ST_1(n,k) \, x^k = (x)_n \andq
  \sum_{k=1}^{n} \ST_2(n,k) \, (x)_k = x^n,
\]
and also expressible by Bell polynomials via
\begin{align*}
  \ST_1(n,k) &= (-1)^{n-k} \BE_{n,k}(0!, 1!, \ldots, (n-k)!), \\
  \ST_2(n,k) &= \BE_{n,k}(1, 1, \ldots, 1). \\
\end{align*}

\begin{theorem} \label{thm:psi-rec}
For $n \geq 1$, we have the recurrence formula
\begin{equation} \label{eq:psi-rec2}
  \psi_n = n \, \psi_{n-1} + \sigpi_{n}^\star + \Psi_n,
\end{equation}
where $\psi_0 = 0$, 
\[
  \sigpi_{n}^\star = (-1)^n \BE_n(-x_1,-1! \, x_2,\ldots,-(n-1)! \, x_n),
\]  
and 
\begin{equation} \label{eq:psi-sum}
  \Psi_n = \sum_{\nu=2}^{n} \sum_{k=0}^{\min(\nu,n-\nu)} (-1)^{\nu+1} S_1(\nu+1,k+1) \,
  (n)_k \, \BE_{n-k,\nu}(\psi_1,\ldots,\psi_{n-k-\nu+1}).
\end{equation}
\end{theorem}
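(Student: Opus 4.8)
The plan is to make explicit the clause ``terms of $\psi_1,\ldots,\psi_{\nu-1}$'' left abstract in the recurrence~\eqref{eq:psi-rec}, by pushing the computation inside the proof of Theorem~\ref{thm:wpl-psi} one step further. The entry point is~\eqref{eq:tpl}: since $T_{p,n}\equiv\psi_n(Q_p)\smod{p}$, it reads
\[
  \psi_n(Q_p)\equiv n\,\psi_{n-1}(Q_p)+\sigpi_n^\star(Q_p)
  +n!\sum_{\nu=1}^{n-1}\coeff{p^{n-1-\nu}}S_{p,n,\nu}\smod{p}.
\]
The first two summands already supply the terms $n\,\psi_{n-1}$ and $\sigpi_n^\star$ of~\eqref{eq:psi-rec2}, and the Bell-polynomial form of $\sigpi_n^\star$ quoted in the theorem is immediate from~\eqref{eq:sigma-bell} (recall $\sigpi_k^\star=k!\,\sigpi_k$, with $\pi_\nu$ replaced by $x_\nu$). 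So the whole task reduces to showing that $n!\sum_{\nu=1}^{n-1}\coeff{p^{n-1-\nu}}S_{p,n,\nu}\equiv\Psi_n(Q_p)\smod{p}$ with $\Psi_n$ as in~\eqref{eq:psi-sum}.

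To evaluate that sum I would re-index exactly as in the proof of Theorem~\ref{thm:wpl-psi}: peel off the summand $\nu=n-1$ and shift the remaining index. By the same elementary reduction used there, and using $\BE_{n,n}(\psi_1)=\psi_1^n$ and $\ST_1(n+1,1)=(-1)^n n!$, the peeled summand $n!\coeff{p^0}S_{p,n,n-1}\equiv-n!\,\psi_1^n$ is absorbed into the case $\nu=n$, so that
\[
  n!\sum_{\nu=1}^{n-1}\coeff{p^{n-1-\nu}}S_{p,n,\nu}
  \equiv\sum_{\nu=2}^{n}\coeff{p^{n}}(-1)^{\nu-1}(p-1)_\nu\,\frac{n!}{\nu!}
  \mleft(\sum_{j=1}^{n-\nu+1}\frac{p^j}{j!}\,\psi_j(Q_p)\mright)^{\!\nu}\smod{p}.
\]
The device that produces the Stirling numbers of the first kind is the expansion of the falling factorial: from $(x)_{\nu+1}=\sum_k\ST_1(\nu+1,k)\,x^k$ one gets $(p-1)_\nu=(p)_{\nu+1}/p=\sum_{k=0}^{\nu}\ST_1(\nu+1,k+1)\,p^k$.

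Substituting this expansion and invoking Lemma~\ref{lem:p-sum-pow} termwise (with Lemma's $n$ equal to ours, Lemma's exponent $k$ equal to our $\nu$, Lemma's shift $r$ equal to the Stirling exponent $k$, and the extracted order equal to $n$), the contribution of $\ST_1(\nu+1,k+1)\,p^k$ equals $\tfrac{n!}{(n-k)!}\BE_{n-k,\nu}(\psi_1,\ldots,\psi_{n-k-\nu+1})=(n)_k\,\BE_{n-k,\nu}(\psi_1,\ldots,\psi_{n-k-\nu+1})$ when $k\le n-\nu$ and vanishes otherwise; since moreover $\ST_1(\nu+1,k+1)=0$ for $k>\nu$, the effective range of $k$ is $0\le k\le\min(\nu,n-\nu)$. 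Summing over $\nu$ and $k$ and rewriting $(-1)^{\nu-1}=(-1)^{\nu+1}$ reproduces~\eqref{eq:psi-sum} term by term. As $\psi_n$ is, by the proof of Theorem~\ref{thm:wpl-psi}, determined as a polynomial over $\ZZ$ independent of $p$, this computation of the right-hand side of~\eqref{eq:tpl} turns the congruence into the identity~\eqref{eq:psi-rec2} in $\ZZ[x_1,\ldots,x_n]$, the base case $n=1$ being immediate since $\psi_0=0$, $\sigpi_1^\star=x_1$, and $\Psi_1$ is the empty sum.

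I expect the only real difficulty to be bookkeeping: keeping the two index shifts consistent, making the summation bounds match so that the cut-off $\min(\nu,n-\nu)$ emerges cleanly from the vanishing clause of Lemma~\ref{lem:p-sum-pow}, and checking that the peeled summand really fits the uniform expression. As a safeguard the formula should be verified against Table~\ref{tbl:psi}, e.g., that it returns $\psi_2=2x_1-x_1^2-x_2$ and $\psi_3=6x_1-6x_1^2+x_1^3+3x_1x_2-3x_2+2x_3$.
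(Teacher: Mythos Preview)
Your proposal is correct and follows essentially the same route as the paper's own proof: starting from~\eqref{eq:tpl}, passing through~\eqref{eq:spl} after the index shift $\nu\mapsto\nu-1$, expanding $(p-1)_\nu$ via the Stirling numbers of the first kind, and invoking Lemma~\ref{lem:p-sum-pow} to produce the Bell polynomials in~\eqref{eq:psi-sum}. If anything, you are slightly more explicit than the paper in checking that the boundary summand $-n!\,\psi_1^n$ is absorbed as the $\nu=n$, $k=0$ term of the uniform expression (via $\BE_{n,n}(\psi_1)=\psi_1^n$ and $\ST_1(n+1,1)=(-1)^n n!$), which the paper handles tacitly when writing~\eqref{eq:psi-sum2}.
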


\begin{proof}
For $n=1$, \eqref{eq:psi-rec2} holds by $\psi_0 = \Psi_1 = 0$ and $\psi_1 = \sigpi_1^\star = x_1$. 
By Theorem~\ref{thm:wpl-psi} and \eqref{eq:psi-rec}, we have that \eqref{eq:psi-rec2} holds with 
${\psi_1 = x_1}$, $\sigpi_{n}^\star$ is given by \eqref{eq:sigma-bell}, 
and $\Psi_n =$ terms of $\psi_1, \ldots, \psi_{n-1}$ for $n \geq 2$.
We now follow the proof of Theorem~\ref{thm:wpl-psi}.
From \eqref{eq:tpl} and \eqref{eq:spl}, and translating the congruences into 
relations over polynomials with a similar notation, we infer that
\begin{equation} \label{eq:psi-sum2}
  \Psi_n = n! \sum_{\nu=2}^{n} \coeff{p^{n-\nu}} \widetilde{S}_{p,n,\nu-1},
\end{equation}
where for fixed $\nu = 2, \ldots, n$ and $m=n-\nu+1$, we have for each summand that
\[
  n! \coeff{p^{n-\nu}} \widetilde{S}_{p,n,\nu-1} = \coeff{p^{n}} (-1)^{\nu-1} (p-1)_\nu \frac{n!}{\nu!}
  \mleft( \sum_{j=1}^{m} \frac{p^j}{j!} \, \psi_j \mright)^{\!\!\nu}.
\]
By definition, we have the expansion
\[
  (p-1)_\nu = \sum_{k=0}^{\nu} S_1(\nu+1,k+1) \, p^k.
\]
We then obtain
\begin{align*}
  n! \coeff{p^{n-\nu}} \widetilde{S}_{p,n,\nu-1}
  &= \sum_{k=0}^{\nu} (-1)^{\nu-1} S_1(\nu+1,k+1) \coeff{p^{n}} p^k \frac{n!}{\nu!}
  \mleft( \sum_{j=1}^{m} \frac{p^j}{j!} \, \psi_j \mright)^{\!\!\nu} \\
  &= \sum_{k=0}^{\min(\nu,n-\nu)} (-1)^{\nu-1} S_1(\nu+1,k+1)
  \frac{n!}{(n-k)!} \BE_{n-k,\nu}(\psi_1,\ldots,\psi_{n-k-\nu+1}).
\end{align*}
The latter equation follows from Lemma~\ref{lem:p-sum-pow},
where the summation is bounded, since terms for $k+\nu > n$ vanish.
By summing the latter sum over $\nu$, \eqref{eq:psi-sum2} finally turns into \eqref{eq:psi-sum}
using the substitutions ${(-1)^{\nu+1} = (-1)^{\nu-1}}$ and $(n)_k = n!/(n-k)!$.
\end{proof}

See Tables~\ref{tbl:Psi} and~\ref{tbl:Psi2} for the first few computed polynomials~$\Psi_j$.
Unfolding the recurrence \eqref{eq:psi-rec2} immediately leads to the following result. 

\begin{corollary} \label{cor:psi-sum}
For $n \geq 1$, we have
\begin{align} 
  \psi_n &= \sum_{j=1}^{n} (n)_{n-j} \mleft( \sigpi_{j}^\star + \Psi_j \mright). \label{eq:psi-rec3}
\shortintertext{Moreover, for the sum of the coefficients of the polynomials, it follows that}
  \psi_n(1,\ldots,1) &= n! + \sum_{j=2}^{n} (n)_{n-j} \, \Psi_j(1,\ldots,1). \label{eq:psi-rec4}
\end{align}
\end{corollary}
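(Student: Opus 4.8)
The plan is to obtain \eqref{eq:psi-rec3} by unfolding the first-order linear recurrence \eqref{eq:psi-rec2}, namely $\psi_n = n\,\psi_{n-1} + \bigl(\sigpi_n^\star + \Psi_n\bigr)$ with $\psi_0 = 0$, and then to derive \eqref{eq:psi-rec4} by specializing all variables to $1$ and invoking Lemma~\ref{lem:sigma-pi}.

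First I would prove \eqref{eq:psi-rec3} by induction on $n$. The base case $n=1$ is immediate, since $(1)_0 = 1$, $\Psi_1 = 0$, and $\sigpi_1^\star = x_1 = \psi_1$. For the inductive step I assume the closed form for $\psi_{n-1}$, substitute it into \eqref{eq:psi-rec2}, and use the telescoping identity $n\,(n-1)_{(n-1)-j} = (n)_{n-j}$, which holds because $(n)_{n-j} = n!/j!$ for $1 \le j \le n$. The extra summand $\sigpi_n^\star + \Psi_n$ is then exactly the $j=n$ term (with coefficient $(n)_0 = 1$), and the claimed formula follows. This is the entire content of the first displayed equation.

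Next, for \eqref{eq:psi-rec4} I would evaluate \eqref{eq:psi-rec3} at $x_1 = \cdots = x_n = 1$. The $j=1$ term contributes $(n)_{n-1}\,\bigl(\sigpi_1^\star(1) + \Psi_1(1,\ldots,1)\bigr) = (n)_{n-1} = n!$, using that $\sigpi_1^\star = \pi_1$ evaluates to $1$ and $\Psi_1 = 0$. For each $j \ge 2$, Lemma~\ref{lem:sigma-pi} gives $\sum_{\gamma \in \PF_j} c_\gamma = 0$, that is, $\sigpi_j^\star(1,\ldots,1) = 0$; hence only the $\Psi_j$-contributions survive, which yields $\psi_n(1,\ldots,1) = n! + \sum_{j=2}^{n} (n)_{n-j}\,\Psi_j(1,\ldots,1)$ as asserted.

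There is essentially no serious obstacle: the argument is a routine unfolding of a first-order recurrence followed by a specialization. The only points needing a little care are the bookkeeping of the falling-factorial coefficients — checking that $n\,(n-1)_{(n-1)-j}$ really equals $(n)_{n-j}$ and that the top index $j=n$ is absorbed cleanly — and the separation of the case $j=1$ from $j \ge 2$ when setting all variables to $1$, where the vanishing $\sigpi_j^\star(1,\ldots,1)=0$ for $j\ge 2$ supplied by Lemma~\ref{lem:sigma-pi} is precisely what makes the sum collapse to the stated form.
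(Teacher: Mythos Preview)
Your proposal is correct and follows essentially the same approach as the paper: the paper simply states that \eqref{eq:psi-rec3} follows from unfolding the term $n\,\psi_{n-1}$ in \eqref{eq:psi-rec2} (your induction is exactly this unfolding made explicit), and then obtains \eqref{eq:psi-rec4} by evaluating at $1$ and invoking Lemma~\ref{lem:sigma-pi} to kill the $\sigpi_j^\star$ contributions for $j \ge 2$. Your bookkeeping of the falling factorials and the separation of the $j=1$ term is correct.
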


\begin{proof}
As before, we have ${\psi_0 = \Psi_1 = 0}$ and ${\psi_1 = \sigpi_1^\star = x_1}$. 
The sum in \eqref{eq:psi-rec3} follows from unfolding the term $n \, \psi_{n-1}$ in \eqref{eq:psi-rec2}.
By Lemma~\ref{lem:sigma-pi}, we have that $\sigpi_{j}^\star(1,\ldots,1) = 0$ for $j \geq 2$.
The result~\eqref{eq:psi-rec4} then follows.
\end{proof}

\begin{theorem} \label{thm:psi-ord}
For $n \geq 1$ and $1 \leq k \leq n$, we have
\begin{equation} \label{eq:psi-ord}
  \norm{\psi_n} = n, \quad \norm{\Psi_n} \leq n, \andq
  \norm{\BE_{n,k}(\psi_1,\ldots,\psi_{n-k+1})} \leq n.
\end{equation}
In particular, we have $\BE_{n,n}(\psi_1) = x_1^n$ and $\norm{\BE_{n,n}(\psi_1)} = n$.
Moreover, $\sigpi_{n}^\star$ only gives a contribution of the variable $x_n$ to $\psi_n$.
More precisely, we have for $n \geq 2$ the pattern that
\begin{equation} \label{eq:psi-pat}
  \psi_n = n! \, x_1 + \cdots + (-1)^{n-1} (n-1)! \, x_n.
\end{equation}
\end{theorem}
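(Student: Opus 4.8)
The plan is to establish all assertions of Theorem~\ref{thm:psi-ord} at once, by a single induction on $n$, reading the recurrence \eqref{eq:psi-rec2} together with the explicit shape \eqref{eq:sigpi} of $\sigpi_{n}^\star$, the monomial description \eqref{eq:bnk} of the partial Bell polynomials, and the sum formula \eqref{eq:psi-sum} for $\Psi_n$. Two elementary facts about the maximum partition order are used throughout. Besides the strong triangle inequality $\norm{f+g}\le\max(\norm f,\norm g)$ recorded earlier, one has the submultiplicative bound $\norm{fg}\le\norm f+\norm g$: a monomial of $fg$ is the concatenation of a monomial of $f$ with a monomial of $g$, so its partition order is the sum of the two orders, and the strong triangle inequality gives the claim after collecting terms. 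I also use repeatedly that $\psi_j\in\ZZ[x_1,\ldots,x_j]$, i.e.\ $\psi_j$ involves no variable $x_i$ with $i>j$. The base case $n=1$ is immediate, since $\psi_1=x_1$, $\Psi_1=0$, and $\BE_{1,1}(x_1)=x_1$.

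Assume the theorem holds for all indices smaller than $n$. The first step is a bound for the Bell polynomials: by \eqref{eq:bnk}, the monomials of $\BE_{m,k}(x_1,\ldots,x_{m-k+1})$ are, up to integer coefficients, the products $\prod_i x_i^{j_i}$ with $\sum_i j_i=k$ and $\sum_i i\,j_i=m$, so substituting $x_i\mapsto\psi_i$ and applying the two inequalities above gives $\norm{\prod_i\psi_i^{j_i}}\le\sum_i j_i\,\norm{\psi_i}\le\sum_i i\,j_i=m$, provided $\norm{\psi_i}\le i$ for every index $i\le m-k+1$ that occurs. For the Bell polynomials appearing in $\Psi_n$ the largest such index is $n-k-\nu+1\le n-1$, because $\nu\ge2$, so only $\psi_1,\ldots,\psi_{n-1}$ occur and the induction hypothesis applies; hence $\norm{\BE_{n-k,\nu}(\psi_1,\ldots,\psi_{n-k-\nu+1})}\le n-k$, the integer prefactors in \eqref{eq:psi-sum} do not change the partition order, and therefore $\norm{\Psi_n}\le\max_{\nu,k}(n-k)\le n$. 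Feeding $\norm{\Psi_n}\le n$, $\norm{\psi_{n-1}}=n-1$, and $\norm{\sigpi_{n}^\star}=n$ (Lemma~\ref{lem:sigma-pi}) into \eqref{eq:psi-rec2} and using the strong triangle inequality gives $\norm{\psi_n}\le n$.

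For the matching lower bound, the key observation is that $x_n$ enters the right-hand side of \eqref{eq:psi-rec2} only through $\sigpi_{n}^\star$: the term $n\,\psi_{n-1}$ lies in $\ZZ[x_1,\ldots,x_{n-1}]$, and $\Psi_n$ is a polynomial in $\psi_1,\ldots,\psi_{n-1}$, hence also in $\ZZ[x_1,\ldots,x_{n-1}]$. By \eqref{eq:sigpi} the only occurrence of $x_n$ in $\sigpi_{n}^\star$ is the monomial $(-1)^{n-1}(n-1)!\,x_n$, so this monomial survives uncancelled in $\psi_n$; since $\norm{x_n}=n$, this yields $\norm{\psi_n}\ge n$, hence $\norm{\psi_n}=n$, and simultaneously it proves that $\sigpi_{n}^\star$ contributes to $\psi_n$ exactly the variable $x_n$, with linear coefficient $(-1)^{n-1}(n-1)!$. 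Now that $\norm{\psi_j}\le j$ holds for all $j\le n$, the Bell polynomial estimate applies in the full range $1\le k\le n$ of the theorem --- the only value reaching index $n$ is $k=1$, where $\BE_{n,1}(\psi_1,\ldots,\psi_n)=\psi_n$ --- giving $\norm{\BE_{n,k}(\psi_1,\ldots,\psi_{n-k+1})}\le n$; moreover $\BE_{n,n}(\psi_1)=\psi_1^n=x_1^n$ because the only partition of $n$ into $n$ parts is $(1,\ldots,1)$, whence $\norm{\BE_{n,n}(\psi_1)}=n$. Finally, comparing linear parts in \eqref{eq:psi-rec2} gives the pattern \eqref{eq:psi-pat}: for $n\ge2$ the polynomial $\sigpi_{n}^\star$ has no linear $x_1$-term (its leading monomial is $x_1^n$ by \eqref{eq:sigpi}) and $\Psi_n$ has no linear term at all (each summand of \eqref{eq:psi-sum}, being homogeneous of degree $\nu\ge2$ in the $\psi_i$ with each $\psi_i$ of zero constant term, has all its monomials of total degree at least $2$), so the coefficient of $x_1$ in $\psi_n$ equals $n$ times that in $\psi_{n-1}$; starting from $\psi_1=x_1$ this yields the coefficient $n!$, while the coefficient $(-1)^{n-1}(n-1)!$ of $x_n$ was identified above.

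The single genuinely delicate point is the non-cancellation argument underpinning the lower bound. It dictates the order in which the induction must be carried out --- the equality $\norm{\psi_n}=n$ has to be secured before the Bell polynomial estimate can be invoked in the case $k=1$ --- and it rests essentially on the containment $\psi_j\in\ZZ[x_1,\ldots,x_j]$, which guarantees that no other term of \eqref{eq:psi-rec2} can interfere with the distinguished monomial $(-1)^{n-1}(n-1)!\,x_n$. Everything else reduces to routine bookkeeping with the monomial description of the Bell polynomials and the two inequalities for $\norm{\cdot}$.
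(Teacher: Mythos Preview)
Your proof is correct and follows essentially the same route as the paper's own argument: an induction on $n$ that first bounds the Bell polynomials appearing in $\Psi_n$ (which involve only $\psi_1,\ldots,\psi_{n-1}$), then uses the recurrence \eqref{eq:psi-rec2} together with the fact that $x_n$ can enter $\psi_n$ only through $\sigpi_n^\star$ to pin down $\norm{\psi_n}=n$ and the coefficient of $x_n$, and finally handles the $k=1$ Bell polynomial and the $x_1$-coefficient. Your explicit statement of the submultiplicative bound $\norm{fg}\le\norm{f}+\norm{g}$ and the observation that $\Psi_n$ contains no linear monomial whatsoever (rather than just no $x_1$ or $x_n$) streamline the presentation slightly, but the logical skeleton is the same as the paper's.
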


\begin{proof}
We use proof by induction. The case ${n=1}$ trivially holds by ${\psi_0 = \Psi_1 = 0}$ 
and $\psi_1 = \sigpi_1^\star = x_1$, and
$\BE_{1,1}(\psi_1) = \psi_1$. Now, let ${n \geq 2}$ and assume that \eqref{eq:psi-ord} 
holds for $n-1, \ldots, 1$, and \eqref{eq:psi-pat} holds for $n-1$.
For ${1 \leq k \leq n}$, we derive from~\eqref{eq:bnk} that
\begin{equation} \label{eq:bnk-psi}
  \BE_{n,k}(\psi_1,\ldots,\psi_{n-k+1}) 
  = \sum_{\substack{j_1+2j_2+3j_3+\cdots = n\\j_1+j_2+j_3+\cdots = k}} 
  \frac{n!}{j_1! \cdots j_{n-k+1}!} 
  \prod_{\nu=1}^{n-k+1} \left(\frac{\psi_\nu}{\nu!}\right)^{j_\nu}.
\end{equation}
We first consider the case $k = 2,\ldots,n$, since only $\psi_1,\ldots,\psi_{n-1}$ are involved.
Fix one summand and index $\nu$ of the product of the right-hand side of \eqref{eq:bnk-psi}.
We look at these monomials, being a part of the product, and check their partitions. 
For example, in the simple case of 
\eqref{eq:bnk} and \eqref{eq:bnk-ord}, we would obtain
\[
  x_\nu^{j_\nu} = x_\gamma \textq{with} \norm{\gamma} = \nu j_\nu.
\]
Returning to \eqref{eq:bnk-psi}, we have a product of polynomials, namely, $\psi_\nu^{j_\nu}$.
We have to multiply these polynomials out. We choose any $j_\nu$ monomials from $\psi_\nu$. 
Then we obtain a product like
\[
  x_{\gamma'_1} \cdots x_{\gamma'_{j_\nu}} = x_{\gamma'}.
\]
with partitions $\gamma'_1, \ldots, \gamma'_{j_\nu}$, and $\gamma'$.
From $\norm{\psi_\nu} = \nu$ by assumption, we infer that
$\norm{\gamma'_\mu} \leq \nu$ for $\mu = 1, \ldots, j_\nu$
and so $\norm{\gamma'} \leq \nu j_\nu$.
Since this reasoning holds for all monomials, we conclude that
\begin{equation} \label{eq:bnk-leq}
  \norm{\BE_{n,k}(\psi_1,\ldots,\psi_{n-k+1})} \leq n.
\end{equation}
For $k=n$, we obtain by \eqref{eq:bnk-psi} the simple case that 
$\BE_{n,n}(\psi_1) = x_1^n$ and so $\norm{\BE_{n,n}(\psi_1)} = n$.

Regarding $\Psi_n$ and \eqref{eq:psi-sum}, we have to consider terms of 
$\BE_{n-k,\nu}$ with ${k \geq 0}$ and ${\nu \geq 2}$.
Therefore, it follows from \eqref{eq:bnk-leq} that $\norm{\Psi_n} \leq n$. 
Moreover, by \eqref{eq:psi-sum} and \eqref{eq:bnk-psi}, 
the monomials $x_1$ and $x_n$ cannot occur in $\Psi_n$.
Since $\norm{\psi_{n-1}} = n-1$ by assumption, it follows from \eqref{eq:psi-rec2} that 
$\sigpi_{n}^\star$, having monomials $x_\gamma$ for all partitions $\gamma$ of $n$ 
by Lemma~\ref{lem:sigma-pi}, 
can only contribute the monomial $x_n$ to $\psi_n$, showing that $\norm{\psi_n} = n$.
By \eqref{eq:sigpi}, this is the term $(-1)^{n-1} (n-1)! \, x_n$
as claimed in \eqref{eq:psi-pat}. Since the monomial $x_1$ is not in $\sigpi_{n}^\star$,
we infer from \eqref{eq:psi-rec2}, and \eqref{eq:psi-pat} for $n-1$ by assumption that
$n \, \psi_{n-1}$ provides the term $n! \, x_1$ in \eqref{eq:psi-pat}. 

It remains the case $k=1$ of \eqref{eq:bnk-psi}. With ${\norm{\psi_n} = n}$ 
and using the same arguments for $\BE_{n,k}$ from above, we finally derive 
that \eqref{eq:bnk-leq} also holds for $k=1$, showing \eqref{eq:psi-ord} completely.
This completes the induction and finishes the proof.
\end{proof}

\begin{corollary} \label{cor:psi-num}
For $n \geq 1$, we have $\# \psi_n \leq \PFS(n)$.
\end{corollary}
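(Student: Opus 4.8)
The plan is to bound $\# \psi_n$ by counting the partitions that can index a monomial of $\psi_n$. Recall from the preliminary discussion introducing the notation $x_\gamma$ and $\norm{\cdot}$ (preceding Lemma~\ref{lem:sigma-pi}) that every nonzero polynomial $f \in \ZZ[x_1,x_2,\ldots]$ has a unique representation $f = \sum_{\gamma \in \PF} c_\gamma\, x_\gamma$ with $c_\gamma \in \ZZ \setminus \set{0}$, where $x_\gamma = \prod_{\nu=1}^{|\gamma|} x_{\gamma_\nu}$; since the multiplicities of the variables in a monomial recover the partition, this is a bijection between the monomials of $f$ and a finite set of partitions. Consequently $\# \psi_n$ equals the number of partitions $\gamma$ occurring in the representation of $\psi_n$, so it suffices to locate all partitions that can possibly occur and count them.

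First I would invoke two facts already established. By Theorem~\ref{thm:wpl-psi} the polynomial $\psi_n$ has no constant term, so the empty partition does not occur and $\norm{\gamma} \geq 1$ for every monomial $x_\gamma$ of $\psi_n$. By Theorem~\ref{thm:psi-ord} we have $\norm{\psi_n} = n$, which by the definition of the maximum partition order means exactly that $\norm{\gamma} \leq n$ for every such $\gamma$. Hence every partition indexing a monomial of $\psi_n$ lies in $\bigcup_{\nu=1}^{n} \PF_\nu$.

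It then remains to count. Since $|\PF_\nu| = \PF(\nu)$ and the sets $\PF_\nu$ are pairwise disjoint (a partition of $\nu$ has sum $\nu$), we get $\bigl| \bigcup_{\nu=1}^{n} \PF_\nu \bigr| = \sum_{\nu=1}^{n} \PF(\nu) = \PFS(n)$. As the monomials of $\psi_n$ are indexed by a subset of this union, $\# \psi_n \leq \PFS(n)$, which is the assertion of Corollary~\ref{cor:psi-num}, and in particular establishes Theorem~\ref{thm:num}.

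I do not expect a genuine obstacle: the statement is a direct bookkeeping consequence of $\norm{\psi_n} = n$ (the content of Theorem~\ref{thm:psi-ord}) together with the vanishing of the constant term. The only point that needs a moment's attention is making the translation between monomials of a polynomial in $\ZZ[x_1,x_2,\ldots]$ and partitions fully explicit --- in particular that the partition order $\norm{\gamma}$ of the monomial $x_1^{a_1} x_2^{a_2}\cdots$ is the weighted degree $\sum_{\nu \geq 1} \nu\, a_\nu$ --- but this is precisely the convention fixed when $x_\gamma$ and $\norm{\cdot}$ were introduced, so the proof is short.
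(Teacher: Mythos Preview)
Your proof is correct. It rests on the same key ingredient as the paper's argument --- the partition-order bound from Theorem~\ref{thm:psi-ord} --- but applies it more directly: you invoke $\norm{\psi_n} = n$ itself (together with the absence of a constant term), whereas the paper routes through the unfolded expression $\psi_n = \sum_{j=1}^{n} (n)_{n-j}\bigl(\sigpi_j^\star + \Psi_j\bigr)$ of Corollary~\ref{cor:psi-sum}, using that each $\sigpi_j^\star$ contributes exactly the monomials indexed by $\PF_j$ (Lemma~\ref{lem:sigma-pi}) and that $\norm{\Psi_j} \leq j$. Your route is shorter for the bare inequality; the paper's decomposition has the incidental advantage of exhibiting that every partition of size at most $n$ already appears in $\sum_j \sigpi_j^\star$, which is what motivates the conjecture that $\#\psi_n = \PFS(n)$ holds with equality.
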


\begin{proof}
Let $1 \leq j \leq n$.
By Lemma~\ref{lem:sigma-pi}, each $\sigpi_{j}^\star$ consists of monomials $x_\gamma$ 
with ${\gamma \in \PF_j}$, and $\# \sigpi_{j}^\star = \PF(j)$.
Hence, we have a decomposition by the partitions $\PF_j$ such that
\[
  \# \sum_{j=1}^{n} \sigpi_{j}^\star = \sum_{j=1}^{n} \# \sigpi_{j}^\star = \PFS(n).
\] 
Since Theorem~\ref{thm:psi-ord} shows that $\norm{\Psi_j} \leq j$, 
we infer from \eqref{eq:psi-rec3} that $\# \psi_n \leq \PFS(n)$.
\end{proof}

\begin{proof}[Proof of Theorem~\ref{thm:num}]
This follows from Corollary~\ref{cor:psi-num}.  
\end{proof}

\begin{remark}
Regarding Theorem \ref{thm:psi-ord}, we should have sharper statements such that 
\begin{equation} \label{eq:psi-bnk-ord}
  \norm{\Psi_n} = n \andq \norm{\BE_{n,k}(\psi_1,\ldots,\psi_{n-k+1})} = n
\end{equation}
for ${n \geq 2}$ and ${1 \leq k < n}$, which seem to be supported by Tables~\ref{tbl:Psi2} 
and~\ref{tbl:Bnk}, respectively. However, since terms of the polynomials $\psi_\nu$ have 
different signs (see Tables~\ref{tbl:psi} and~\ref{tbl:psi2}), 
terms may be canceled out when computing \eqref{eq:psi-bnk-ord}. 
For the case $\Psi_n$, e.g., compare Tables~\ref{tbl:Psi} and~\ref{tbl:Psi2}.
For the case $\BE_{n,k}$, one may conjecture in view of Table~\ref{tbl:Bnk} and further 
computed terms that $\BE_{n,k}$ always contains the term $(-1)^{n-k} \, \ST_2(n,k) \, x_1^n$.
\end{remark}

At the end, we consider the sum of the coefficients of the polynomials $\Psi_j$ and $\psi_j$ 
regarding Corollary~\ref{cor:psi-sum}.
The sequence of $\Psi_j(1,\ldots,1)$ begins
\[
  0, -2, 3, -16, 50, -366, 1932, -16\,640, 131\,112, -1\,272\,600, 13\,642\,200, \ldots,
\]
which is not yet contained in the OEIS~\cite{OEIS}. With the latter sequence, 
we compute by \eqref{eq:psi-rec4} the sequence of $\psi_j(1,\ldots,1)$ as
\[
  1, 0, 3, -4, 30, -186, 630, -11\,600, 26\,712, -1\,005\,480, 2\,581\,920, \ldots
\]
It appears that this sequence above probably corresponds to sequence \seqnum{A347978}, 
but with opposite sign. Similarly, the sequence of $-\psi_j(-1,\ldots,-1)$ reads
\[
  1, 2, 9, 44, 290, 2154, 19\,026, 186\,752, 2\,070\,792, 25\,119\,720, \ldots,
\]
which probably coincides with sequence \seqnum{A073478}.

Define the alternating harmonic numbers for $n \geq 1$ by 
\[
  \AH_n = \sum_{\nu=1}^{n} \frac{(-1)^{\nu+1}}{\nu}. 
\]
Supported by further computations, we arrive at the following conjecture.

\begin{conjecture}
For $n \geq 1$, we have 
\[
  \psi_n(\pm 1, \ldots, \pm 1) = - \BE_n(\mp \AH_1, \mp 2! \, \AH_2, \ldots, \mp n! \, \AH_n),
\] 
and the generating function is given by
\[
  \sum_{n \geq 1} \psi_n(\pm 1, \ldots, \pm 1) \frac{x^n}{n!}
  = 1 - (x+1)^{\mp 1/(1-x)}, 
\]
choosing the corresponding signs, respectively.
\end{conjecture}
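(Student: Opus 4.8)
Here is how I would attack the conjecture; the idea is to compress the entire recursive construction of the $\psi_\nu$ into a single formal generating‑function identity and then read both assertions off it.

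\medskip

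The plan is to work in $\mathbb{Q}[x_1,x_2,\dots][[t]]$ and to put
$W(t)=\sum_{\nu\ge1}\frac{t^{\nu-1}}{\nu!}\,\psi_\nu(x_1,\dots,x_\nu)$,
so that $t\,W(t)=\sum_{\nu\ge1}\frac{t^{\nu}}{\nu!}\psi_\nu$ has zero constant term. Then
$(1-tW(t))^{t-1}:=\sum_{k\ge0}\binom{t-1}{k}\bigl(-tW(t)\bigr)^{k}$
is a well‑defined power series, since each $\binom{t-1}{k}$ is a polynomial in $t$ while $(tW(t))^{k}$ has $t$‑order at least $k$, and it equals $\exp\bigl((t-1)\log(1-tW(t))\bigr)$. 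The key step would be to prove the master identity
\[
  (1-tW(t))^{t-1}=\sum_{\mu\ge0}t^{\mu}\,\sigpi_\mu(x_1,\dots,x_\mu),
\]
with the variables $x_\nu$ playing the role of the power sums. This is the formal shadow of the lemma behind \eqref{eq:wp-sigma}, namely $\prod_{a}(1+p\,q_p(a))=(1-p\WQ_p)^{p-1}$ together with $\sigma_\mu(q_p)=\sigpi_\mu(Q_p)$ and $p\WQ_p\equiv\sum_{\nu}\frac{p^{\nu}}{\nu!}\psi_\nu(Q_p)$.

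\medskip

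I would establish this identity by comparing the coefficient of $t^{n}$ on the two sides and recognising the recurrence of Theorem~\ref{thm:psi-rec}. Expanding $(tW(t))^{k}=k!\sum_{m\ge k}\BE_{m,k}(\psi_1,\dots,\psi_{m-k+1})\frac{t^{m}}{m!}$ by \eqref{eq:gf-bnk} and $(t-1)_\nu=\sum_{k}\ST_1(\nu+1,k+1)\,t^{k}$ as in the proof of Theorem~\ref{thm:psi-rec}, one checks that, after multiplying by $n!$: the term $k=0$ drops out, the term $k=1$ produces $\psi_n-n\,\psi_{n-1}$, the terms $k\ge2$ produce $-\Psi_n$ with $\Psi_n$ exactly in the shape \eqref{eq:psi-sum}, and the right‑hand side produces $\sigpi_n^{\star}/n!\cdot n!=\sigpi_n^{\star}$. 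Rearranging, the coefficient comparison is precisely $\psi_n=n\,\psi_{n-1}+\sigpi_n^{\star}+\Psi_n$, which holds by Theorem~\ref{thm:psi-rec} (the cases $n=0,1$ being immediate). An alternative would be to deduce the master identity directly from the lemma behind \eqref{eq:wp-sigma} using that it holds for infinitely many primes, but the route through Theorem~\ref{thm:psi-rec} sidesteps having to discuss both the $p$‑adic convergence of $\sum_{\nu}\frac{p^{\nu}}{\nu!}\psi_\nu$ and the meaning of the exponent $p-1$.

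\medskip

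With the master identity in hand, fix $\varepsilon=\pm1$ and apply the ($t$‑adically continuous) homomorphism sending every $x_\mu$ to $\varepsilon$. Then all power sums become $\varepsilon$, so by \eqref{eq:sigma-bell} one gets $\sum_{\mu\ge0}t^{\mu}\sigpi_\mu(\varepsilon,\dots,\varepsilon)=\exp\bigl(\varepsilon\log(1+t)\bigr)=(1+t)^{\varepsilon}$. Writing $W_\varepsilon(t)=\sum_{\nu\ge1}\frac{t^{\nu-1}}{\nu!}\psi_\nu(\varepsilon,\dots,\varepsilon)$, the master identity reads $(1-tW_\varepsilon(t))^{t-1}=(1+t)^{\varepsilon}$ in $\mathbb{Q}[[t]]$; taking logarithms gives $\log(1-tW_\varepsilon(t))=-\tfrac{\varepsilon\log(1+t)}{1-t}$, a series with zero constant term, whence
\[
  \sum_{n\ge1}\psi_n(\varepsilon,\dots,\varepsilon)\frac{t^{n}}{n!}=t\,W_\varepsilon(t)=1-\exp\!\Bigl(-\tfrac{\varepsilon\log(1+t)}{1-t}\Bigr)=1-(1+t)^{-\varepsilon/(1-t)}.
\]
Since $-\varepsilon=\mp1$ for $\varepsilon=\pm1$, this is the generating function asserted in the conjecture. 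For the closed form, use $\tfrac{\log(1+t)}{1-t}=\sum_{n\ge1}\AH_n\,t^{n}$ (the coefficients being the partial sums of the logarithm series), so that the complete Bell generating function gives $(1+t)^{-\varepsilon/(1-t)}=\exp\bigl(\sum_{n\ge1}(-\varepsilon\,n!\,\AH_n)\tfrac{t^{n}}{n!}\bigr)=1+\sum_{n\ge1}\BE_n(-\varepsilon\AH_1,-\varepsilon\,2!\,\AH_2,\dots,-\varepsilon\,n!\,\AH_n)\tfrac{t^{n}}{n!}$; comparison with the previous display yields $\psi_n(\varepsilon,\dots,\varepsilon)=-\BE_n(-\varepsilon\AH_1,\dots,-\varepsilon\,n!\,\AH_n)$, which is the claimed identity with $\mp=-\varepsilon$. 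The hard part is the second paragraph: the bookkeeping matching the $t^{n}$‑coefficient of the binomial series with the recurrence \eqref{eq:psi-sum} — pinning down the Stirling‑number ranges and isolating the $k=1$ term correctly — together with the preliminary check that $(1-tW(t))^{t-1}$ is a bona fide formal power series even though $t$ occurs simultaneously in the base and in the exponent. Once that is in place, the specialisation and the two closed‑form readings are only a few lines of formal manipulation.
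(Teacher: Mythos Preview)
The paper does not prove this statement --- it is explicitly labelled a \emph{conjecture}, supported only by numerical evidence. So there is no ``paper's own proof'' to compare against; you have in fact supplied what the paper lacks.

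Your argument is correct. The heart of it is the master identity
\[
  (1-tW(t))^{t-1}=\sum_{\mu\ge0}\sigpi_\mu(x_1,\dots,x_\mu)\,t^\mu
  \qquad\text{in }\mathbb{Q}[x_1,x_2,\dots][[t]],
\]
and this really does follow from Theorem~\ref{thm:psi-rec} by the bookkeeping you sketch. Concretely: the $\nu=0$ term of the binomial expansion contributes $1$; the $\nu=1$ term contributes $\frac{1}{n!}(\psi_n-n\,\psi_{n-1})$ to the coefficient of $t^n$ (using $(t-1)_1=t-1$ and $\BE_{m,1}=\psi_m$); and the terms $\nu\ge2$, after inserting $(t-1)_\nu=\sum_{k=0}^{\nu}\ST_1(\nu+1,k+1)\,t^k$ and the Bell-polynomial expansion of $(tW)^{\nu}$, contribute exactly $-\Psi_n/n!$ with $\Psi_n$ as in \eqref{eq:psi-sum}. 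Hence the coefficient of $t^n$ on the left is $\frac{1}{n!}(\psi_n-n\,\psi_{n-1}-\Psi_n)=\sigpi_n$ by \eqref{eq:psi-rec2}, which is the right-hand side. The well-definedness of $(1-tW(t))^{t-1}$ is exactly as you say: each $\binom{t-1}{k}$ is a polynomial of degree $k$ in $t$ while $(tW)^k$ has $t$-order at least $k$, so every coefficient is a finite sum; equivalently, $(t-1)\log(1-tW(t))$ has positive $t$-order so its exponential makes sense.

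The remaining steps are routine and you carry them out correctly: specialising $x_\mu\mapsto\varepsilon$ turns the right-hand side into $(1+t)^{\varepsilon}$ via the power-sum--elementary-symmetric generating function; taking logarithms and dividing by the unit $t-1\in\mathbb{Q}[[t]]$ gives the generating-function form of the conjecture; and the identity $\log(1+t)/(1-t)=\sum_{n\ge1}\AH_n t^n$ together with the complete-Bell generating function yields the closed form $\psi_n(\varepsilon,\dots,\varepsilon)=-\BE_n(-\varepsilon\AH_1,\dots,-\varepsilon\,n!\,\AH_n)$. Your proposal thus settles the conjecture.
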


\newpage


\appendix
\section{Computations}

\begin{table}[H] \small
\setstretch{1.25}
\begin{center}
\begin{tabular}{r@{\;=\;}l}
  \toprule
  $\BE_{1,1}$ & $x_1$ \\
  \midrule
  $\BE_{2,1}$ & $2 x_1 - x_1^2 - x_2$ \\
  $\BE_{2,2}$ & $x_1^2$ \\
  \midrule
  $\BE_{3,1}$ & $6 x_1 - 6 x_1^2 + x_1^3  + 3 x_1 x_2 - 3 x_2 + 2 x_3$ \\
  $\BE_{3,2}$ & $6 x_1^2 - 3 x_1^3 - 3 x_1 x_2$ \\
  $\BE_{3,3}$ & $x_1^3$ \\
  \midrule
  $\BE_{4,1}$ & $24 x_1 - 36 x_1^2 + 12 x_1^3 - x_1^4 - 6 x_1^2 x_2 + 24 x_1 x_2 - 8 x_1 x_3 - 12 x_2 - 3 x_2^2 + 8 x_3 - 6 x_4$ \\
  $\BE_{4,2}$ & $36 x_1^2 - 36 x_1^3 + 7 x_1^4 + 18 x_1^2 x_2 - 24 x_1 x_2 + 8 x_1 x_3 + 3 x_2^2$ \\
  $\BE_{4,3}$ & $12 x_1^3 - 6 x_1^4 - 6 x_1^2 x_2$ \\
  $\BE_{4,4}$ & $x_1^4$ \\
  \bottomrule
\end{tabular}

\caption{First few polynomials $\BE_{n,k}(\psi_1,\ldots,\psi_{n-k+1})$.}
\label{tbl:Bnk}
\end{center}
\end{table}

\begin{table}[H] \small
\setstretch{1.25}
\begin{center}
\begin{tabular}{r@{\;=\;}l}
  \toprule
  $\Psi_1$ & $0$ \\
  $\Psi_2$ & $-2 \BE_{2,2}$ \\
  $\Psi_3$ & $9 \BE_{2,2} - 2 \BE_{3,2} - 6 \BE_{3,3}$ \\
  $\Psi_4$ & $-12 \BE_{2,2} + 12 \BE_{3,2} + 44 \BE_{3,3} - 2 \BE_{4,2} - 6 \BE_{4,3} - 24 \BE_{4,4}$ \\
  $\Psi_5$ & $-20 \BE_{3,2} - 120 \BE_{3,3} + 15 \BE_{4,2} + 55 \BE_{4,3} + 250 \BE_{4,4} - 2 \BE_{5,2} - 6 \BE_{5,3} - 24 \BE_{5,4} - 120 \BE_{5,5}$ \\
  \bottomrule
\end{tabular}

\caption{First few polynomials $\Psi_j$ in terms of $\BE_{n,k}(\psi_1,\ldots,\psi_{n-k+1})$.}
\label{tbl:Psi}
\end{center}
\end{table}

\begin{table}[H] \small
\setstretch{1.25}
\begin{center}
\begin{tabular}{r@{\;=\;}l}
  \toprule
  $\Psi_1$ & $0$ \\
  $\Psi_2$ & $-2 x_1^2$ \\
  $\Psi_3$ & $-3 x_1^2 + 6 x_1 x_2$ \\
  $\Psi_4$ & $-12 x_1^2 + 8 x_1^3 -2 x_1^4 + 12 x_1 x_2 - 16 x_1 x_3 - 6 x_2^2$ \\
  $\Psi_5$ & $-60 x_1^2 + 60 x_1^3 - 15 x_1^4 + 20 x_1^3 x_2 - 60 x_1^2 x_2 + 60 x_1 x_2 - 40 x_1 x_3 + 60 x_1 x_4 - 15 x_2^2 + 40 x_2 x_3$ \\
  \bottomrule
\end{tabular}

\caption{First few polynomials $\Psi_j$ in terms of $x_k$.}
\label{tbl:Psi2}
\end{center}
\end{table}

\begin{table}[H] \small
\setstretch{1.25}
\begin{center}
\begin{tabular}{r@{\;}c@{\;}l}
  \toprule
  $\psi_5$ & $=$ & $120 x_1 - 240 x_1^2 + 120 x_1^3 - 20 x_1^4 + x_1^5 + 10 x_1^3 x_2 - 90 x_1^2 x_2 + 20 x_1^2 x_3 + 180 x_1 x_2$ \\
  & & $+ 15 x_1 x_2^2 - 80 x_1 x_3 + 30 x_1 x_4 - 60 x_2 - 30 x_2^2 + 20 x_2 x_3 + 40 x_3 - 30 x_4 + 24 x_5$ \\
  $\psi_6$ & $=$ & $720 x_1 - 1800 x_1^2 + 1200 x_1^3 - 300 x_1^4 + 30 x_1^5 - x_1^6  
  - 15 x_1^4 x_2 + 240 x_1^3 x_2 - 40 x_1^3 x_3$ \\
  & & $- 1080 x_1^2 x_2 - 45 x_1^2 x_2^2 + 360 x_1^2 x_3 - 90 x_1^2 x_4 + 1440 x_1 x_2 + 270 x_1 x_2^2 - 120 x_1 x_2 x_3$ \\
  & & $- 720 x_1 x_3 + 360 x_1 x_4 - 144 x_1 x_5 - 360 x_2 - 270 x_2^2 - 15 x_2^3 + 240 x_2 x_3 - 90 x_2 x_4$ \\
  & & $+ 240 x_3 - 40 x_3^2 - 180 x_4 + 144 x_5 - 120 x_6$ \\
  \bottomrule
\end{tabular}

\caption{Multivariate polynomials $\psi_j$ continued.}
\label{tbl:psi2}
\end{center}
\end{table}


\bibliographystyle{amsplain}

\begin{thebibliography}{99}
\setlength{\itemsep}{5pt}

\bibitem{Bachmann:1902}
P.~Bachmann,
\emph{Niedere Zahlentheorie},
part \textbf{1}, Teubner, Leipzig, 1902.
(Parts 1 and 2 reprinted in one volume, Chelsea, New York, 1968.)

\bibitem{Bell:1934}
E.~T.~Bell, 
\emph{Exponential polynomials}, 
Ann. Math. (2) \textbf{35} (1934), 258--277.

\bibitem{Comtet:1974}
L.~Comtet,
\emph{Advanced Combinatorics. The Art of Finite and Infinite Expansions},
Reidel, Dordrecht, 1974.

\bibitem{Eisenstein:1850}
G.~Eisenstein,
\emph{Eine neue Gattung zahlentheoretischer Funktionen, welche von zwei Elementen 
abh\"angen und durch gewisse lineare Funktional-Gleichungen definiert werden},
Ber. K. Preuss. Akad. Wiss. Berlin \textbf{15} (1850), 36--42; 
\emph{Mathematische Werke}, Band II, Chelsea, New York (1975), 705--711. 

\bibitem{Glaisher:1900}
J.~W.~L.~Glaisher,
\emph{On the residues of the sums of products of the first ${p-1}$ numbers, 
and their powers, to modulus $p^2$ or $p^3$},
Quart. J. Math. \textbf{31} (1900), 321--353.

\bibitem{Kellner:2025}
B.~C.~Kellner,
\emph{Wilson's theorem modulo higher prime powers II: Bernoulli numbers and polynomials}, 
preprint, 2025.

\bibitem{Lagrange:1771}
J.-L.~Lagrange, 
\emph{D\'{e}monstration d'un th\'{e}or\`{e}me nouveau concernant les nombres premiers},
Nouv. M\'{e}m. Acad. de Berlin \textbf{2} (1771),
also in {\OE}uvres de Lagrange, vol. \textbf{3}, Gauthier-Villars, Paris (1869), 425--438.

\bibitem{Lerch:1905}
M.~Lerch,
\emph{Zur Theorie des Fermatschen Quotienten $\frac{a^{p-1}-1}p=q(a)$},
Math. Ann. \textbf{60} (1905), 471--490.

\bibitem{OEIS}
OEIS Foundation Inc. (2025),
\emph{The On-Line Encyclopedia of Integer Sequences},
published electronically at \url{https://oeis.org}.

\bibitem{Sun:2000}
Z.~H.~Sun,
\emph{Congruences concerning Bernoulli numbers and Bernoulli polynomials},
Discrete Appl. Math. \textbf{105} (2000), 193--223.

\end{thebibliography}

\end{document}